\documentclass[11pt, a4paper]{article}
\usepackage{amsmath, amssymb, amsfonts, amsthm}
\usepackage[pdftex]{graphicx, color}

\setlength{\oddsidemargin}{0cm}
\setlength{\evensidemargin}{0cm}
\setlength{\textwidth}{16cm}
\setlength{\topmargin}{-1.5cm}
\setlength{\textheight}{23cm}

\newcommand{\E}{{\bf{E}}}
\newcommand{\PP}{{\bf{P}}}
\newcommand{\Var}{{\bf{Var}}}
\newcommand{\Cov}{{\bf{Cov}}}

\newtheorem{pr}{Proposition}
\newtheorem{tm}{Theorem}
\newtheorem{lem}{Lemma}

\newtheorem{obs}{Observation}
\begin{document}

\parindent=0pt

\smallskip
\par\vskip 3.5em
\centerline{\Large \bf Connectivity threshold for superpositions  of Bernoulli}
\centerline{\Large \bf random graphs}

\vglue2truecm

\vglue1truecm
\centerline{Daumilas Ardickas and Mindaugas Bloznelis}

\bigskip

\centerline{Institute of Computer Science, Vilnius University}
\centerline{
\, \ Didlaukio 47, LT-08303 Vilnius, Lithuania} 

\vglue2truecm

\abstract{Let $G_1,\dots, G_m$ be independent
Bernoulli random subgraphs of the complete graph ${\cal K}_n$ having
variable sizes $x_1,\dots, x_m\in [n]$  and densities $q_1,\dots, q_m\in [0,1]$. 
Letting $n,m\to+\infty$, we study the connectivity threshold for the union 
$\cup_{i=1}^mG_i$ defined on the vertex set of ${\cal K}_n$. 
Assuming that the empirical distribution 
$P_{n,m}$
of the pairs $(x_1,q_1),\dots, (x_m,q_m)$ converges to a probability distribution $P$ we show that the threshold is defined by the mixed moments 
$\kappa_n=\iint  x(1-(1-q)^{|x-1|})P_{n,m}(dx,dq)$.
For $\ln n-\frac{m}{n}\kappa_n\to-\infty$ we have
 $\PP\{\cup_{i=1}^mG_i$ is connected$\}\to 1$ and for
$\ln n-\frac{m}{n}\kappa_n\to+\infty$
we have
 $\PP\{\cup_{i=1}^mG_i$ is connected$\}\to 0$.
Interestingly, this dichotomy only holds if
the mixed moment
$\iint  x(1-(1-q)^{|x-1|})\ln(1+x)P(dx,dq)<\infty$.
}
\smallskip
\par\vskip 3.5em
\section{Introduction}

Let $G_1,\dots, G_m$ be independent Bernoulli random subgraphs of the complete graph ${\cal K}_n$ having variable sizes $x_1,\dots, x_m$ and densities $q_1,\dots, q_m$. More precisely,
each $G_k=({\cal V}_k,{\cal E}_k)$ is obtained by  selecting 
a subset of vertices ${\cal V}_k$ of ${\cal K}_n$ of size 
$|{\cal V}_k|=x_k$ 
and retaining edges between selected vertices independently at random with probability $q_k$.  Therefore, each $G_k$  is a random graph on $x_k$ vertices, where every pair of vertices is linked by an edge independently at random with probability $q_k$.
The union  
$G_1\cup\cdots\cup G_m$
 has been used by  Yang and Leskovec \cite{Yang_Leskovec2012, Yang_Leskovec2014}
 as a benchmark network model in their study of overlapping community detection algorithms. Since then, this network model, called  community affiliation graph, has received considerable attention in the literature.
 
 In this paper we 
 study the connectivity property of the null model 
 of the community affiliation graph, where
 the vertex sets ${\cal V}_1$, $\dots$, ${\cal V}_m$  of the layers (communities) $G_1,\dots, G_m$ are drawn independently at random from the vertex set
  ${\cal V}=[n]$ of ${\cal K}_n$.
  Moreover, we consider the union 
  $G=\bigl({\cal V},\cup_{k=1}^m{\cal E}_k\bigr)$
  with the  (non-random) vertex set  ${\cal V}$ rather than 
  $\cup_{k=1}^m {\cal V}_k$.
 Note that 
 the probability distribution of the random graph  $G$
 is determined by the sequence of pairs $(x_1,q_1), \dots, (x_m,q_m)$
 or, equivalently, by the empirical distribution $P_{n,m}=\frac{1}{m}\sum_{k=1}^m\delta_{(x_k,q_k)}$ that assigns mass $\frac{1}{m}$
 to each member $(x_k,q_k)$ of the sequence.
 We let $n,m\to+\infty$ and assume that $P_{n,m}$ admits a limiting shape, that is, $P_{n,m}$ converges (weakly) to some probability distribution $P$ defined on $\{0,1,\dots\}\times[0,1]$.
 Let us mention that in the range of parameters  $m=m(n)=\Theta(n)$
 the random graph $G$ 
 admits a power law  asymptotic degree distribution with tunable exponent  and nontrivial clustering spectrum \cite{Lasse_Mindaugas2019}. Moreover, in the same range $m=\Theta(n)$  the giant connected component emerges that contains   a fraction of all vertices \cite{Lasse_Mindaugas2019}. In the present paper we address the property of full connectivity.
 In the range  of $m=\Theta(n\log n)$ we establish the connectivity threshold and  described it in terms of moments of $P$. 

The paper is organized as follows. In the remaining part of this section we  present our results and give a brief overview of related previous work. Proofs are postponed to the next section.

\subsection{Results}
We establish the connectivity threshold for a more general network model
admitting random community sizes, which we describe below.
Given $n$ and $m$, let $(X_{n,1},Q_{n,1}),\dots, (X_{n,m},Q_{n,m})$ be independent bivariate random variables taking values in 
$\{0,1,\dots, n\}\times [0,1]$.
For each $i\in[m]$, the $i$-th community 
$G_{n,i}=({\cal V}_{n,i},{\cal E}_{n,i})$ is defined as follows. 
We firstly sample $(X_{n,i},Q_{n,i})$. Then, given $(X_{n,i},Q_{n,i})$,
we select the vertex set ${\cal V}_{n,i}\subset [n]$ of size $|{\cal V}_{n,i}|=X_{n,i}$ uniformly at random and link every pair of elements of ${\cal V}_{n,i}$ independently at random with probability $Q_{n,i}$. In what follows we study   the union graph $G_{[n,m]}=({\cal V}, {\cal E})$. Here  
the vertex set ${\cal V}=[n]$ and the edge set 
${\cal E}=\cup_{i\in[m]}{\cal E}_{n,i}$. We call $G_{[n,m]}$ community affiliation graph 
defined by the sequence 
$(X_{n,1},Q_{n,1}),\dots, (X_{n,m},Q_{n,m})$.  In Theorem \ref{theorem} below we consider a sequence of random graphs $G_{[n,m]}$ for $n,m\ge 1$ and establish the zero - one law for the probability of  $G_{[n,m]}$ being connected as $n,m\to+\infty$. We will  assume that $m=m(n)$.

Before formulating the results we introduce some notation. Let $i_*$ be a random number uniformly distributed in $[m]$. We assume that $i_*$ is independent of the sequence $\{(X_{n,i},Q_{n,i})$, $i\in [m]\}$. The bivariate random variable 
$(X_{n,i_*},Q_{n,i_*})$ has probability distribution,
\begin{displaymath}
\PP\bigr\{X_{n,i_*}=x, Q_{n,i_*}\in [0,t]\bigl\}
=\frac{1}{m}\sum_{i\in[m]}\PP\{X_{n,i}=x, Q_{n,i}\in[0,t]\}.
\end{displaymath}
We denote this probability distribution $P_{n,m}$. We will assume that $P_{n,m}$ converges weakly to a probability distribution, say, $P$ on $\{0,1,2,\dots\}\times [0,1]$. 
Let $(X,Q)$ be a bivariate random variable with the distribution $P$.
Introduce the function 
\begin{displaymath}
h(x,q)=1-(1-q)^{(x-1)_+}
\end{displaymath}
 defined for $x\ge 0$ and $0\le q\le 1$.  
 Here 
 we denote $(a)_+=\max\{a,0\}$
and  assign value $1$ to the expression $0^0$. 
By ${\mathbb I}_{\cal A}$ we denote the indicator function of the condition (event) ${\cal A}$.
Denote 
\begin{displaymath}
\kappa_n=\E\bigl(X_{n,i_*}h(X_{n,i_*},Q_{n,i_*})\bigr)
=\frac{1}{m}\sum_{i\in[m]}\E\bigl(X_{n,i}h(X_{n,i},Q_{n,i})\bigr)
\end{displaymath}
and
\begin{displaymath}
\lambda_{n,m}=\ln n-\frac{m}{n}\kappa_n.
\end{displaymath}

\begin{tm}\label{theorem} Let $m,n\to+\infty$.
 Assume that   $(X_{n,i_*},Q_{n,i_*})$ converges in distribution to some
  bivariate random random variable $(X,Q)$ such that 
 $\E\bigl(Q{\mathbb I}_{\{X\ge 2\}}\bigr)>0$  and 
\begin{equation}
\label{2023-04-29+12}
\E \bigl(Xh(X,Q)\ln (1+X)\bigr)<\infty.
\end{equation} 
Assume, in addition, that
\begin{equation}
\label{2023-06-03}
\E \bigl(X_{n,i_*}h(X_{n,i_*},Q_{n,i_*})\ln (1+X_{n,i_*})\bigr)
\to
\E \bigl(Xh(X,Q)\ln (1+X)\bigr).
\end{equation} 

Then 
\begin{equation}
\label{2023-05-08}
\PP\{G_{[n,m]}\ {\text{\rm{is connected}}}\}
\to 
\begin{cases} 0,
\quad
{\text{\rm{for}}}
\quad
\lambda_{n,m}\to +\infty,
\\
1,
\quad
{\text{\rm{for}}}
\quad
\lambda_{n,m}\to -\infty.
\end{cases}
\end{equation} 
\end{tm}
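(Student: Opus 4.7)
I would prove both halves of the dichotomy by analysing $N_{0}$, the number of isolated vertices of $G_{[n,m]}$, supplemented (in the subcritical case) by an Erd\H{o}s--R\'enyi style exclusion of small components. Write $c_{i}:=\E(X_{n,i}h(X_{n,i},Q_{n,i}))$, so that $\sum_{i}c_{i}=m\kappa_{n}$. Conditioning a fixed vertex $v$ on its membership in $\mathcal{V}_{n,i}$ and on $(X_{n,i},Q_{n,i})$, the probability that $v$ receives no edge from community $i$ equals exactly $1-c_{i}/n$; by independence across communities,
\begin{equation*}
\E N_{0}=n\prod_{i=1}^{m}(1-c_{i}/n).
\end{equation*}
The inequality $\ln(1-x)\le -x$ yields $\E N_{0}\le e^{\lambda_{n,m}}$, which already gives $\PP(N_{0}\ge 1)\to 0$ in the subcritical case $\lambda_{n,m}\to-\infty$. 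To upgrade to full connectivity I would bound $\PP(G_{[n,m]}\text{ disconnected})\le 2\E N_{0}+\sum_{k=2}^{n/2}\binom{n}{k}p_{n,k}$, with $p_{n,k}$ the probability that a fixed $k$-set has no edge to its complement. A bilinear analogue of the isolation calculation, based on the hypergeometric overlap $|\mathcal{V}_{n,i}\cap S|$, gives $p_{n,k}\le\prod_{i}(1-c\,k(n-k)c_{i}/n^{2})$ for an absolute $c>0$, and the standard Erd\H{o}s--R\'enyi sum-splitting at $k=n/\log n$ makes $\sum_{k\ge 2}\binom{n}{k}p_{n,k}=o(1)$.

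\textbf{Supercritical direction.} Here I need $N_{0}\ge 1$ w.h.p.\ via Chebyshev, which requires both a matching lower bound $\E N_{0}\to\infty$ and a variance estimate. For the lower bound I use $\ln(1-x)\ge -x-x^{2}/(1-x)$ and truncate communities by $X_{n,i}\le L_{n}$ for a suitably chosen $L_{n}\to\infty$: the small-$X$ part of the second-order correction is bounded by $L_{n}m\kappa_{n}/n^{2}$, while the tail and the number of atypical communities with $c_{i}=\Theta(n)$ are controlled by the Markov-type inequality
\begin{equation*}
\frac{m}{n}\int_{x>L_{n}}xh(x,q)\,P_{n,m}(dx,dq)\le\frac{m/n}{\ln(1+L_{n})}\,\E\bigl(X_{n,i_*}h(X_{n,i_*},Q_{n,i_*})\ln(1+X_{n,i_*})\bigr),
\end{equation*}
together with conditions (\ref{2023-04-29+12}) and (\ref{2023-06-03}). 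Second, running the first-moment computation for an ordered pair $(u,v)$ yields $\E N_{0}(N_{0}-1)=n(n-1)\prod_{i}(1-2c_{i}/n+d_{i}/n^{2})$ for an explicit pair-moment $d_{i}$, and a term-by-term comparison with $(\E N_{0})^{2}=n^{2}\prod_{i}(1-c_{i}/n)^{2}$ gives $\E N_{0}(N_{0}-1)/(\E N_{0})^{2}=1+o(1)$; Chebyshev then forces $N_{0}\ge 1$ w.h.p.

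\textbf{Main obstacle.} The delicate step is the lower bound on $\E N_{0}$ in the supercritical case. Unlike in $G(n,p)$, individual communities can have $c_{i}$ of order $n$, so the second-order expansion of $\prod_{i}(1-c_{i}/n)$ is not automatically negligible: the crude bounds $\sum_{i}(c_{i}/n)^{2}\le(\max c_{i}/n)\cdot m\kappa_{n}/n=O(\log n\cdot\max c_{i}/n)$ are far too weak, and one really needs to exploit a tail bound on the weighted empirical distribution of $(X_{n,i},Q_{n,i})$. The logarithmic-moment hypothesis (\ref{2023-04-29+12}) together with the uniform-integrability condition (\ref{2023-06-03}) are precisely what make the truncation work, controlling simultaneously the tail of the quadratic correction and the number of exceptional communities; as the abstract observes, without them the dichotomy genuinely breaks down.
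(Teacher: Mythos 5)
Your overall architecture matches the paper's: a second-moment argument on the number $N_0$ of isolated vertices for $\lambda_{n,m}\to+\infty$, and a union bound over cuts $(A,{\cal V}\setminus A)$ with $1\le|A|\le n/2$ for $\lambda_{n,m}\to-\infty$. Your identity $\E N_0=n\prod_i(1-c_i/n)$ and the control of the quadratic correction $\sum_i(c_i/n)^2$ by truncation at $L_n$ are exactly the paper's (\ref{2023-05-17+20})--(\ref{2023-06-13+1}); the pair computation for $\E N_0(N_0-1)$ likewise mirrors the paper's bound on $\PP\{{\cal I}_{u,v}\}$. One small repair there: your displayed Markov-type inequality must keep the indicator ${\mathbb I}_{\{X_{n,i_*}>L_n\}}$ inside the expectation on the right-hand side; as written it only yields $O\bigl(\tfrac{\ln n}{\ln L_n}\bigr)=O(1)$, and you need the uniform integrability supplied by (\ref{2023-06-03}) to upgrade this to $o(1)$.

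The genuine gap is the crossing-edge estimate in the connectivity direction. The claimed bound $p_{n,k}\le\prod_i\bigl(1-c\,k(n-k)c_i/n^2\bigr)$ is false: a single community of size $x$ and density $q$ produces a crossing edge with probability at most $h(x,q)\le 1$, whereas $k(n-k)\,x\,h(x,q)/n^2$ can be of order $n$ (take $x=n$, $q=1$, $k=n/2$, so $c_i=n$ and the factor is negative). What is actually available (the paper's Lemma \ref{Daumilo-lema}) is $1-q_r(x,q)\ge\bigl(\tfrac{r}{n}x-R_1-R_2\bigr)h(x,q)$ with $R_2=e^{-rx/n}-1+\tfrac{r}{n}x$, a bound that saturates at $h(x,q)$ once $rx/n$ is large, together with the cruder single-edge bound $1-q_r(x,q)\ge 2\tfrac{r(n-r)}{n(n-1)}q$. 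This forces a two-regime split of the cut sum: for $r\le n^{\beta}$ the linearized bound applies, but only after controlling $\E\bigl(R_2(r,X_{n,k})h(X_{n,k},Q_{n,k})\bigr)$ --- the same delicate truncation you flag for $\E N_0$, again requiring the logarithmic moment via the functions $\phi$ and $\varphi$; for $r> n^{\beta}$ one can only extract roughly $e^{-2r m\alpha_n/n}$ per the single-edge bound, and beating $\binom{n}{r}$ there is precisely where the hypothesis $\E\bigl(Q{\mathbb I}_{\{X\ge2\}}\bigr)>0$ enters (your proposal never uses it) and why the split point is $n^{\beta_0}$ with $\beta_0$ depending on $\alpha/\kappa$ rather than $n/\log n$. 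Your ``main obstacle'' paragraph correctly identifies the truncation issue for the first moment, but the identical issue, compounded by the saturation phenomenon above, governs the union bound over cuts and is not addressed.
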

We note that Theorem \ref{theorem} covers the case of 
non-random community sizes and densities. 
Let $\left((x_{n,1},q_{n,1}),\dots, (x_{n,m}, q_{n,m})\right)$,
$n=1,2,\dots$ with $m=m(n)$ 
 be a sequence  of non-random  bivariate vectors 
such that $(x_{n,i},q_{n,i})\in\{0,1,\dots, n\}\times [0,1]$ for each 
$n$ and $1\le i\le m$. 
To treat this case one applies Theorem \ref{theorem} 
to the   series  of degenerate random vectors
$\bigl((X_{n,1},Q_{n,1}),\dots,(X_{n,m},Q_{n,m})\bigr)$, $n=1,2,\dots$ with $m=m(n)$, where
$\PP\bigl\{(X_{n,i},Q_{n,i})=(x_{n,i},q_{n,i})\bigr\}=1$
for $1\le i\le m$, $n\ge 1$.

Another interesting case is when the community sizes are identically distributed. Let $(X,Q)$, $(X_1,Q_1)$, $(X_2,Q_2),\dots$ be independent identically distributed bivariate random variables taking values in 
$\{0,1,\dots\}\times [0,1]$. In Proposition \ref{proposition} below we consider community affiliation graph defined by the sequence $({\tilde X}_{n,1},Q_{n,1}),\dots, ({\tilde X}_{n,m},Q_{n,m})$, where 
${\tilde X}_{n,i}:=\min\{X_i,n\}$. In this case in (\ref{2023-05-08}) we can replace $\lambda_{n,m}$  by 
\begin{displaymath}
\lambda'_{n,m}=\ln n-\frac{m}{n}\kappa,
\qquad
{\text{where}}
\qquad 
\kappa=\E\bigl(Xh(X,Q)\bigr).
\end{displaymath}

\begin{pr}\label{proposition}
Let $m,n\to+\infty$.
 Assume that  
 $\E\bigl(Q{\mathbb I}_{\{X\ge 2\}}\bigr)>0$  and (\ref{2023-04-29+12}) holds. Then 
 \begin{equation}
\label{A}
\PP\{G_{[n,m]}\ {\text{\rm{is connected}}}\}
\to 
\begin{cases} 0,
\quad
{\text{\rm{for}}}
\quad
\lambda'_{n,m}\to +\infty,
\\
1,
\quad
{\text{\rm{for}}}
\quad
\lambda'_{n,m}\to -\infty.
\end{cases}
\end{equation} 
\end{pr}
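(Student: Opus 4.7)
The plan is to derive Proposition \ref{proposition} from Theorem \ref{theorem} applied to the i.i.d.\ sequence $(\tilde X_{n,1},Q_1),\dots,(\tilde X_{n,m},Q_m)$ with $\tilde X_{n,i}=\min\{X_i,n\}$. Since the pairs are i.i.d., the averaged empirical distribution $P_{n,m}$ of Theorem \ref{theorem} coincides with the common distribution of $(\tilde X_n,Q):=(\min\{X,n\},Q)$. Because $X<\infty$ almost surely, $\tilde X_n\uparrow X$ a.s., so $(\tilde X_n,Q)$ converges in distribution to $(X,Q)$. The non-degeneracy and moment conditions $\E(Q\,\mathbb I_{\{X\ge 2\}})>0$ and $\E(Xh(X,Q)\ln(1+X))<\infty$ of Theorem \ref{theorem} hold by assumption, and the moment convergence (\ref{2023-06-03}) will follow from monotone convergence: $h(x,q)$ is nondecreasing in $x$, so $\tilde X_n h(\tilde X_n,Q)\ln(1+\tilde X_n)$ increases pointwise to the integrable $Xh(X,Q)\ln(1+X)$.

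Next I compare Theorem \ref{theorem}'s parameter $\kappa_n=\E(\tilde X_n h(\tilde X_n,Q))$ with Proposition \ref{proposition}'s $\kappa=\E(Xh(X,Q))$. Writing $r_n:=\kappa-\kappa_n\ge 0$, the integrand defining $r_n$ vanishes on $\{X\le n\}$, so $r_n\le\E\bigl[Xh(X,Q)\,\mathbb I_{\{X>n\}}\bigr]$. Using the elementary bound $\mathbb I_{\{X>n\}}\le\ln(1+X)/\ln(1+n)$ together with dominated convergence justified by (\ref{2023-04-29+12}), I obtain the quantitative estimate
\begin{equation*}
r_n\;\le\;\frac{1}{\ln(1+n)}\,\E\bigl[Xh(X,Q)\ln(1+X)\,\mathbb I_{\{X>n\}}\bigr]\;=\;o\!\left(\tfrac{1}{\ln n}\right),
\end{equation*}
and in particular $r_n\to 0$ and $r_n\ln n\to 0$.

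I then split into the two cases of (\ref{A}). If $\lambda'_{n,m}\to+\infty$, then $\kappa_n\le\kappa$ forces $\lambda_{n,m}\ge\lambda'_{n,m}\to+\infty$, and Theorem \ref{theorem} yields $\PP\{G_{[n,m]}\text{ is connected}\}\to 0$. If $\lambda'_{n,m}\to-\infty$, I use $\kappa>0$ (which follows from $h(x,q)\ge q$ for $x\ge 2$ combined with $\E(Q\,\mathbb I_{\{X\ge 2\}})>0$) to substitute $m/n=(\ln n-\lambda'_{n,m})/\kappa$ and rewrite $\lambda_{n,m}=\lambda'_{n,m}(1-r_n/\kappa)+(r_n\ln n)/\kappa$; since $r_n\to 0$ and $r_n\ln n\to 0$, the first term tends to $-\infty$ and the second to $0$, so $\lambda_{n,m}\to-\infty$ and Theorem \ref{theorem} gives $\PP\{G_{[n,m]}\text{ is connected}\}\to 1$. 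The main obstacle is precisely the quantitative rate $r_n=o(1/\ln n)$: the weaker statement $r_n\to 0$ would suffice only in the regime $-\lambda'_{n,m}=O(\ln n)$, but would fail when $-\lambda'_{n,m}\gg\ln n$, because $(m/n)r_n$ could then overpower $|\lambda'_{n,m}|$ and flip the sign of $\lambda_{n,m}$. This is exactly why the $\ln(1+X)$-weighted moment assumption (\ref{2023-04-29+12}) is indispensable here.
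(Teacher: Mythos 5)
Your derivation is correct, but it runs in the opposite direction from the paper: the paper proves Proposition \ref{proposition} directly (first/second moment method on isolated vertices for the $0$-statement, a union bound over cuts combined with Lemma \ref{Daumilo-lema} for the $1$-statement, all carried out for the truncated variables ${\tilde X}_{n,i}$, followed by the transfer from ${\tilde \kappa}_n$ to $\kappa$), and only afterwards proves Theorem \ref{theorem} by describing the modifications needed for non-identically distributed layers. You instead deduce the proposition from the theorem. The reduction is sound: the i.i.d.\ sequence $({\tilde X}_{n,i},Q_i)$ is a special case of the theorem's independent layers; $({\tilde X}_n,Q)\to(X,Q)$ in distribution because $X<\infty$ a.s.; the moment convergence (\ref{2023-06-03}) follows by monotone convergence since $x\mapsto xh(x,q)\ln(1+x)$ is a product of nonnegative nondecreasing factors; and your estimate $r_n=\kappa-{\tilde\kappa}_n=o(1/\ln n)$ is exactly the transfer step the paper itself performs at the end of its proof. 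Your identity $\lambda_{n,m}=\lambda'_{n,m}(1-r_n/\kappa)+r_n\ln n/\kappa$ even lets you bypass the coupling reduction to $m=O(n\ln n)$ of Observation \ref{observation}, a small simplification. What your route costs is expository rather than logical: in the paper the detailed computations live inside the proof of the proposition and the proof of Theorem \ref{theorem} refers back to them, so your ordering only becomes usable once Theorem \ref{theorem} has been given a self-contained proof.

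One small inaccuracy in your closing heuristic: the regime where $r_n\to 0$ alone would fail is not $-\lambda'_{n,m}\gg\ln n$ but rather the opposite one. By your own identity, $\frac{m}{n}r_n=\frac{(\ln n-\lambda'_{n,m})r_n}{\kappa}$; when $-\lambda'_{n,m}\gg\ln n$ this is $\sim -\lambda'_{n,m}r_n/\kappa=o(|\lambda'_{n,m}|)$ and cannot flip the sign of $\lambda_{n,m}$, whereas the danger arises when $|\lambda'_{n,m}|$ is small compared with $r_n\ln n$. This does not affect the validity of your proof, which correctly relies on $r_n\ln n\to 0$.
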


Let us discuss the optimality of the moment conditions
 $\E\bigl(Q{\mathbb I}_{\{X\ge 2\}}\bigr)>0$  and 
 (\ref{2023-04-29+12}).
We notice that condition $\E\bigl(Q{\mathbb I}_{\{X\ge 2\}}\bigr)>0$ of Proposition \ref{proposition}
is neccessary as the opposite assumption that
 $\E\bigl(Q{\mathbb I}_{\{X\ge 2\}}\bigr)=0$ implies
that each $G_{n,i}$ has no edges almost surely. Consequently,  $G_{[n,m]}$ is an empty graph and (\ref{2023-05-08}) fails.
On the other hand, the moment condition (\ref{2023-04-29+12})  may look too restrictive
as it involves an extra logarithmic factor 
compared to the moment $\kappa=\E (Xh(X,Q))$ that describes the threshold
 (\ref{2023-05-08}). Interestingly, the logarithmic factor in (\ref{2023-04-29+12}) can't be 
 removed as  shown in Example below.

\medskip

{\bf Example}. Let  $f$ be a positive function  defined on the set of natural numbers ${\mathbb N}$ satisfying
$\lim_{x\to+\infty}f(x)=0$.
 Choose an increasing sequence 
$\{y_k, k\ge 1\}\subset {\mathbb N}$  such that $\sum_kf(y_k)<\infty$
and put $x_k=2^{2^{y_k}}$ and $p_k=(x_k\ln x_k)^{-1}$.
Denote $S=\sum_{k}p_k$ and 
define the probability distribution $P_{f}$  on ${\mathbb N}$:
$P_{f}(x)=\frac{1}{S}\sum_kp_k\delta_{x_k}$, $x\in {\mathbb N}$. Here $\delta_{x_k}$ denotes the Dirac measure assigning mass $1$ to $x_k$. 
Let $X'$ be a random variable with the distribution $P_{f}$.
It is easy to verify that
\begin{equation}
\label{2023-05-08+1}
\E X'<\infty,
\qquad
\E (X'f(X')\ln(1+X'))<\infty
\qquad
{\text{\rm{and}}}
\qquad
\E (X'\ln(1+X'))=\infty.
\end{equation}
Choose $L>0$ such that 
$\E \bigl({\mathbb I}_{\{X'\ge L\}}X'\bigr)\le 0.5$
and put $X=X'{\mathbb I}_{\{X'\ge L\}}$. 
Clearly, (\ref{2023-05-08+1}) holds for $X$ as well. 
Let $X_1,X_2,\dots$ be independent copies of $X$.
Let $G_{[n,m]}$ be community affiliation graph defined by the sequence of bivariate random vectors $({\tilde X}_{n,i},1)$, $1\le i\le m$. (Here 
we consider the particular case where each $Q_{n,i}\equiv 1$.)
For $m=\lfloor n\ln n\rfloor$ we have $\lambda_{n,m}\ge 0.5\ln n\to+\infty$.
On the other hand,
\begin{align*}
\PP\{G_{[n,m]}\ {\text{\rm{is connected}}}\}
&
\ge
\PP\{\max_{1\le i\le m}X_i\ge n\}
=
1-\PP\{\max_{1\le i\le m}X_i< n\}
\\
&
=
1-\left(\PP\{X< n\}\right)^m
=
1-\left(1-\PP\{X\ge n\}\right)^m
\\
&
\ge
1-e^{-m\PP\{X\ge n\}}.
\end{align*}
Invoking the inequality $\PP\{X\ge n\}\ge \PP\{X=n\}=(n\ln n)^{-1}$,
which holds for $n=x_k$ satisfying $n>L$, we obtain that 
$\PP\{G_{[n,m]}\ {\text{\rm{is connected}}}\}\ge 1-e^{-1}$.
Hence (\ref{2023-05-08}) fails.

\subsection{Related previous work}

Since the seminal work of Erd\H os and R\'enyi \cite{ErdosRenyi1959}  connectivity thresholds for various  random graph models remains an area of active research, see \cite{FriezeKaronski},
\cite{JansonLuczakRucinski2001}, \cite{Hofstad2017} and references therein.
 Erd\H os and R\'enyi \cite{ErdosRenyi1959} considered the random  graph  (now commonly called Erd\H os-R\'enyi graph) 
on $n$ vertices and with $m$ edges selected uniformly at random from the set of $\binom{n}{2}$ possible ones.
They identified and described the connectivity threshold in the parametric range 
$m=\Theta \left(\frac{\ln n}{n}\right)$ as $n,m\to+\infty$.
They showed, in particular, that the probability of  Erd\H os-R\'enyi graph being connected tends to $1$ for $\ln n-2\frac{m}{n}\to-\infty$ and
it tends to $0$ for $\ln n-2\frac{m}{n}\to+\infty$.
This result resembles that of Proposition \ref{proposition} above when applied to the degenerate bivariate random vector $(X,Q)\equiv (2,1)$ (in this case each layer $G_{n,i}$ consist of  a single edge).
Now the only difference between the  two models is that the edges of the Erd\H os-R\'enyi graph are all distinct, while the union  of overlaping layers
$\cup_{i\in [m] }G_{n,i}$ considered in Proposition \ref{proposition} may contain identical layers (complete overlap). Since the probability of the complete overlap is rather small, the connectivity threshold for both models 
is
the same.

One important property of the random graph model considered in the present paper is that it admits inhomogeneous degree sequences. For example, in the (sparse) parametric regime $m=\Theta(n)$ one can obtain the asymptotic power-law degree distribution with a  tunable power-law exponent including  distributions  with infinite second moment \cite{Lasse_Mindaugas2019}. Interestingly, despite the  highly variable degree sequence,  the only characteristic
 that shows up in the threshold description (\ref{2023-05-08}) is the mixed moment $\kappa_n$ (or its limit $\kappa$ in (\ref{A})).
  This is not the case for 
  another class of inhomogeneous random graphs, where edges between vertices, say $u,v$, are inserted independently at random with  probability, say $p_n(u,v)$, depending on each vertex pair 
  $\{u,v\}$. The description of the connectivity threshold for inhomogeneous random graph is a bit more complex  and involves 
  $\operatorname*{ess\,inf}$ of the properly scaled limit of $p_n$ as $n\to+\infty$, see \cite{DevroyeFraiman}. 

Another important feature of our random graph model is that it has the clustering property.
 In particular, it admits non-vanishing global clustering coefficient \cite{ValentasMindaugas_2016}, \cite{Lasse_Mindaugas2019}, see also \cite{PettiVempala2022}.
The clustering property is implied by the bipartite structure defined by the community memberships 
see, e.g.,  \cite{BGJKR2015properties},
\cite{Guillaume+L2004},
\cite{Kurauskas2022}, \cite{Hofstad_Komjathy_Vadon2021}. 
For the random graph models, where actors ($=$vertices) select their community memberships at random and independently across the actors,
and where all actors belonging to a given community are mutually adjacent,
 the  connectivity threshold was established in \cite{BlackburnGerke2009}, 
\cite{Rybarczyk2011},
\cite{Yagan2016},
\cite{YaganMakowski2012}.
For the random graph models, where communities co-opt their 
members at random  and independently of the other communities, 
and where all actors sharing a community are mutually adjacent,
 the  connectivity  threshold was established in
 \cite{BergmanLeskela} and \cite{GodehardJaworskiRybarczyk2007}.
 This latter random graph model corresponds to the particular 
 instance of community affiliation graph $G_{[n,m]}$ where we set 
 all the community edge densities to be non-random and equal to $1$, i.e., 
 $Q_{n,i}\equiv 1$ $\forall n,i$.
In this case
$G_{[n,m]}=({\cal V}, \cup_{i\in [m]}{\cal E}_{n,i})$ is a union of cliques of variable sizes $X_{n,1},\dots, X_{n,m}$. In the literature this random graph  is sometimes called the ``passive'' random intersection graph \cite{GodehardJaworski2001}.
In \cite{GodehardJaworskiRybarczyk2007} the connectivity threshold (\ref{2023-05-08}) has been established  for passive random intersection graph
with cliques having  uniformly bounded sizes ($\exists M>0$ such that
$\PP\{X_{n,i}\le M\}=1$ $\forall n\ge 1$ and $1\le i\le m$). The result of  \cite{GodehardJaworskiRybarczyk2007} has been extended by \cite{BergmanLeskela} to cliques having square integrable sizes.
Our Theorem \ref{theorem} gives a further extension of the results
of \cite{BergmanLeskela} and \cite{GodehardJaworskiRybarczyk2007}: we introduce the community thinning (egdes of  the $i$-th clique of size $X_i$ are retained independently at random with probability $Q_i$) and relax the moment condition up to the minimal one, see (\ref{2023-04-29+12}) above.

\bigskip
 
 We conclude the section with a brief remark about the future work. 
 The zero-one law of Theorem \ref{theorem} could be refined by 
 providing the asymptotics for the probability of $G_{[n,m]}$ being 
 connected in the parametric regime $\lambda_{n,m}\to c$, where $c>0$ is a constant.  A more interesting question is about the connectivity threshold for the union $G'_{[n,m]}=(\cup_{i\in [m]}{\cal V}_{n,i},\cup_{i\in [m]}{\cal E}_{n,i})$, where  the  vertex set 
$ \cup_{i\in [m]}{\cal V}_{n,i}$ is random.

\section{Proofs}

Here we prove Theorem \ref{theorem} and Proposition \ref{proposition}.
We give the proof of Proposition \ref{proposition} first as it is less technical than (and still contains the main ingredients of) the proof of our main result, Theorem \ref{theorem}.

\medskip

Before the proof of Proposition \ref{proposition} we introduce some notation and collect auxiliary results.
Let $A_x$ be a  subset of $[n]$ sampled uniformly at random from the class of subsets of size $x$. Let $q\in [0,1]$. Let $G_x=(A_x,{\cal E}_x)$ be  Bernoulli random graph with the vertex set $A_x$ and with egde probability $q$. Let
\begin{displaymath}
q_r(x,q)=\PP\bigl\{\forall \{u,v\}\in{\cal E}_x
\
\
{\text{ either}}
\
\
 u,v\in[r]
 \
 \
 {\text{or}}
 \
 \
 u,v\in[n]\setminus [r]
 \bigr\}
\end{displaymath}
denote the probability
that none of the edges of $G_x$ connect subsets 
$[r]$ 
and 
$[n]\setminus[r]$
of 
${\cal V}=[n]$.  The probability 
that none of the edges of $G_{n,k}$ connect subsets 
$[r]$ 
and 
$[n]\setminus[r]$
 is denoted 
\begin{displaymath}
{\bar q}_r=\E q_r({\tilde X}_k,Q_k).
\end{displaymath}
  Note that 
  $q_r(x,q)=1$ for $x=0,1$. Hence 
   ${\bar q}_r
   =
   \PP\{X\le 1\}+\E \bigl(
   q_r({\tilde X}_k,Q_k){\mathbb I}_{\{{\tilde X}_k\ge 2\}}
   \bigr)$.
We also note that 
neither $q_r(x,q)$ nor ${\bar q}_r$ depend on $k$.

\begin{lem}\label{Daumilo-lema}
For $1\le r\le n/2$, $2\le x\le n$ and $0\le q\le 1$ we have
\begin{align}
\label{2023-05-03}
q_r(x,q)
&
\le 
1-2\frac{r(n-r)}{n(n-1)}q,
\\
\label{2023-05-01+1}
q_r(x,q)
&
\le 
1-\left(\frac{r}{n}x-R_1-R_2\right)h(x,q),
\end{align}
where $R_1=\frac{r^2}{(n-r)^2}$ 
and 
$R_2=R_2(r,x)=e^{-\frac{r}{n}x}-1+\frac{r}{n}x$.
\end{lem}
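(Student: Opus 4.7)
The plan is to condition on the random vertex set $A_x$. Given $A_x$, the potential crossing edges are the $Y(x-Y)$ pairs linking $A_x\cap[r]$ to $A_x\cap([n]\setminus[r])$, where $Y:=|A_x\cap[r]|$; each appears independently with probability $q$. Hence
\[
q_r(x,q)=\E\bigl((1-q)^{Y(x-Y)}\bigr),
\]
with $Y$ hypergeometric of parameters $(n,r,x)$. Both inequalities then reduce to elementary estimates on the distribution of $Y$.

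For (\ref{2023-05-03}) I would first establish the monotonicity $q_r(x,q)\le q_r(2,q)$ by a coupling: given $A_x$, sample a uniformly random pair $B\subset A_x$. Marginally $B$ is a uniformly random $2$-subset of $[n]$, and the restriction $G_x|_B$ has the law of $G_2$. Since the edges of $G_x|_B$ form a subset of those of $G_x$, the event ``$G_x$ has no crossing edge'' implies ``$G_x|_B$ has no crossing edge'', so $q_r(x,q)\le q_r(2,q)$. I then compute $q_r(2,q)$ directly: the unique potential edge spans $[r]$ and $[n]\setminus[r]$ with probability $\frac{r(n-r)}{\binom{n}{2}}=\frac{2r(n-r)}{n(n-1)}$, and conditionally on this is present with probability $q$, giving $q_r(2,q)=1-\frac{2qr(n-r)}{n(n-1)}$.

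For (\ref{2023-05-01+1}) the key observation is that the quadratic $Y\mapsto Y(x-Y)$ attains its minimum over $\{1,\dots,x-1\}$ at the endpoints, with value $x-1$. Hence $(1-q)^{Y(x-Y)}\le (1-q)^{x-1}=1-h(x,q)$ on the event $\{1\le Y\le x-1\}$, while $(1-q)^{Y(x-Y)}=1$ for $Y\in\{0,x\}$. Splitting the expectation yields
\[
q_r(x,q)\le 1-h(x,q)\,\PP\{1\le Y\le x-1\}=1-h(x,q)\bigl(1-\PP\{Y=0\}-\PP\{Y=x\}\bigr).
\]
It therefore suffices to prove $\PP\{Y=0\}\le e^{-rx/n}$ and $\PP\{Y=x\}\le R_1$, since the identity $1-e^{-rx/n}=\frac{r}{n}x-R_2$ then rewrites the resulting coefficient of $h(x,q)$ as $\frac{r}{n}x-R_1-R_2$.

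These last two tail bounds are routine. For $\PP\{Y=0\}=\prod_{i=0}^{x-1}\bigl(1-\tfrac{r}{n-i}\bigr)$, apply $1-t\le e^{-t}$ factorwise and use $\sum_{i=0}^{x-1}\tfrac{1}{n-i}\ge \tfrac{x}{n}$. For $\PP\{Y=x\}=\prod_{i=0}^{x-1}\tfrac{r-i}{n-i}$ (which vanishes when $x>r$), keep only the first two factors and drop the remaining ones (each at most $1$) to obtain $\tfrac{r(r-1)}{n(n-1)}\le (r/n)^2\le (r/(n-r))^2=R_1$; here $x\ge 2$ is essential. The only step requiring any care is choosing the right elementary estimate so that the bound on $\PP\{Y=x\}$ matches exactly the shape of $R_1$; everything else is mechanical.
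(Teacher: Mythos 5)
Your proof is correct and follows essentially the same route as the paper's: the same decomposition over the overlap $Y=|A_x\cap[r]|$ (the paper writes it as the explicit binomial sum rather than a hypergeometric expectation), the same bound $(1-q)^{Y(x-Y)}\le(1-q)^{x-1}$ for $1\le Y\le x-1$ with the endpoint terms $\PP\{Y=0\}\le e^{-rx/n}$ and $\PP\{Y=x\}\le R_1$ estimated separately, and the identical coupling argument for $q_r(x,q)\le q_r(2,q)$. The only cosmetic differences are that you compute $q_r(2,q)$ exactly instead of specializing the general formula, and you bound $\PP\{Y=x\}$ via $\frac{r(r-1)}{n(n-1)}\le (r/n)^2\le \bigl(r/(n-r)\bigr)^2$ rather than the paper's factorization $A(B+1)\le A+B$ with $B\le\bigl(r/(n-r)\bigr)^x$.
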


\begin{proof}[Proof of Lemma \ref{Daumilo-lema}]
We have
\begin{align}
\nonumber
q_r(x,q)
&
=
{\binom{n}{x}}^{-1}
\sum_{j=0}^x
{\binom{r}{j}}
{\binom{n-r}{x-j}}
(1-q)^{j(x-j)}
\\
\nonumber
&
\le
{\binom{n}{x}}^{-1}
\left(
\sum_{j=0}^x
{\binom{r}{j}}
{\binom{n-r}{x-j}}
(1-q)^{x-1}
+
\left(
{\binom{r}{x}}
+
{\binom{n-r}{x}}
\right)
(1-(1-q)^{x-1})
\right)
\\
\label{2023-05+4}
&
=
(1-q)^{x-1}
+
{\binom{n}{x}}^{-1}
\left(
{\binom{r}{x}}
+
{\binom{n-r}{x}}
\right)
(1-(1-q)^{x-1})
.
\end{align}
Proof of (\ref{2023-05-01+1}).  We estimate the second term of 
(\ref{2023-05+4}). We have 
\begin{align*}
&
A:={\binom{n}{x}}^{-1}{\binom{n-r}{x}}=\frac{(n-r)_x}{(n)_x}\le 
\left(1-\frac{r}{n}\right)^x
\le e^{-\frac{r}{n}x},
\\
&
B:={\binom{n-r}{x}}^{-1}{\binom{r}{x}}
=\frac{(r)_x}{(n-r)_x}
\le 
\left(\frac{r}{n-r}\right)^x
\le 
\left(\frac{r}{n-r}\right)^2.
\end{align*}
Combining these estimates and using simple inequalities $A\le 1$ and $B\le 1$ we obtain that
\begin{align*}
{\binom{n}{x}}^{-1}
\left(
{\binom{r}{x}}
+
{\binom{n-r}{x}}
\right)
=A(B+1)
\le 
B+A
\le 
1-\frac{r}{n}x+R_1+R_2.
\end{align*}
Invoking the latter inequality in (\ref{2023-05+4}) we arrive to 
(\ref{2023-05-01+1}).

Proof of (\ref{2023-05-03}).  We have
\begin{align*}
q_r(x,q)
\le
q_r(2,q)
\le 
1-q+
{\binom{n}{2}}^{-1}
\left(
\binom{r}{2}
+
\binom{n-r}{2}
\right)
{\color{red}q}
=
1
-
2
\frac{r(n-r)}{n(n-1)}
q.
\end{align*}
Here the second inequality follows from 
(\ref{2023-05+4}). To verify the first inequality we sample a subset $U_x\subset [n]$ of size $x$ uniformly at random and, given $U_x$, we sample a subset $U_2\subset U_x$ of size $2$ uniformly at random. 
Given $U_x$, we generate a  Bernoulli random graph (denoted $G_x$) on the vertex set $U_x$ with the edge density $q$.
By $G_2$ we denote its subgraph induced by $U_2$.
Now $1-q_r(x,q)$ (respectively $1-q_r(2,q)$) is the probability that $G_x$ (respectively $G_2$) contains an edge with one endpoint in $[r]$ and another one in $[n]\setminus[r]$. The obvious coupling $\PP\{G_2\subset G_x\}=1$
implies the inequality $1-q_r(x,q)\ge 1-q_r(2,q)$.
Hence $q_r(x,q)\le q_r(2,q)$.
\end{proof}

Now we introduce
two auxiliary functions $\phi(x)$ and $\varphi(x)$ 
related to the distribution of $(X,Q)$.
The integrability condition
(\ref{2023-04-29+12}) implies that
\begin{displaymath}
\E\bigl( h(X,Q)X\ln(1+X)\bigr)
=\sum_{i\ge 2}i\ln (1+i)\E (h(X,Q)|X=i)\PP\{X=i\}
<\infty.
\end{displaymath}
In particular, one can find a positive increasing function $\phi(x)$ such that 
$\lim_{x\to+\infty}\phi(x)=+\infty$ and the series  
$\sum_{i\ge 2}i\ln (1+i)\E (h(X,Q)|X=i)\PP\{X=i)\phi(i)$ converges.
Hence
\begin{equation}
\label{2023-06-13}
\E\bigl( h(X,Q)\phi(X)X\ln(1+X)\bigr)<\infty.
\end{equation}

Moreover,  one can choose $\phi(x)$ such that
 the function 
$x\to x/(\phi(x)\ln (1+x))$ 
was increasing for $x=1,2,\dots$.
Furthermore, the integrability property (\ref{2023-06-13})
implies that the function
\begin{displaymath}
\varphi(t):=
\E\left( h(X,Q)\phi(X)X\ln(1+X){\mathbb I}_{\{X\ge t\}}\right)
\end{displaymath}
decays to zero as $t\to+\infty$.

\begin{proof}[Proof of Proposition \ref{proposition}]
Given $n$,
we write, for short, 
${\tilde X}=\min\{n, X\}$ and ${\tilde X}_{k}={\tilde X}_{n,k}$. 
We denote
${\tilde \kappa}={\tilde \kappa}_n
=\E\bigl({\tilde X}h({\tilde X},Q)\bigr)$  and
${\tilde\lambda}_{n,m}=\ln n-\frac{m}{n}{\tilde \kappa}_n$

The proof consists of two steps. In the first step we show that
\begin{equation}
\label{2023-05-08+4}
\PP\{G_{[n,m]}\ {\text{\rm{is connected}}}\}
\to 
\begin{cases} 0,
\quad
{\text{\rm{for}}}
\quad
{\tilde \lambda}_{n,m}\to +\infty,
\\
1,
\quad
{\text{\rm{for}}}
\quad
{\tilde \lambda}_{n,m}\to -\infty.
\end{cases}
\end{equation} 
In the second step we derive (\ref{A}) from (\ref{2023-05-08+4}).

{\it Proof of   (\ref{2023-05-08+4}).}
 We firstly consider the case where
 ${\tilde\lambda}_{n,m}\to+\infty$. Note that  now
$m=(\ln n-{\tilde\lambda}_{n,m}) n/{\tilde \kappa}_n=O(n\ln n)$. 
Let us prove that
 \begin{equation}
\label{2023-04-27}
\PP\{G_{[n,m]}\ {\text{\rm{is connected}}}\}\to 0.
\end{equation} 
Before the proof we introduce some notation. Given $v,u\in {\cal V}$ we introduce events 
${\cal I}_v=\{v$ is an isolated vertex of $G_{[n,m]}\}$ and
${\cal I}_{u,v}={\cal I}_v\cap{\cal I}_u$.  
For $1\le k\le m$ we introduce events
${\cal A}_{v|k}=\{v\in {\cal V}_{n,k}\}$
and
${\cal I}_{v|k}=\{ v\in {\cal V}_{n,k}$ and $v$  is an isolated vertex of $G_{n,k}\}$.
Given an event ${\cal A}$ we denote by ${\bar {\cal A}}$ and 
${\mathbb I}_{\cal A}$ the complement  event and  the indicator function, respectively.

Now we are ready to prove  (\ref{2023-04-27}). Let $N_0$ denote the number of isolated vertices 
of $G_{[n,m]}$.
We have 
$N_0
=
\sum_{v\in {\cal V}}{\mathbb I}_{{\cal I}_v}$.
Clearly, the inequality $N_0\ge 1$ implies that 
$G_{[n,m]}$ is disconnected. 
Therefore 
(\ref{2023-04-27}) would follow if we show 
that $\PP\{N_0=0\}\to 0$. To 
show the latter relation we  estimate (using  Chebyshev's 
inequality)
\begin{displaymath}
\PP\{N_0=0\}
\le 
\PP\{|N_0-\E N_0|\ge \E N_0\}
\le 
(\E N_0)^{-2}\Var N_0
\end{displaymath}
and invoke the bound $\Var N_0=o(\E N_0)^2$, see below. 
Hence it remains to  show that 
\begin{align}
\label{2023-04-28}
&
\E N_0
=  
ne^{m\ln(1-n^{-1}{\tilde \kappa})}
=
e^{{\tilde\lambda}_{n,m}+O(n^{-2}m{\tilde \kappa}^2)}
=
e^{{\tilde\lambda}_{n,m}+o(1)},
\\
\label{2023-04-28+1}
&
\Var N_0=o(\E N_0)^2.
\end{align}

Proof of (\ref{2023-04-28}).
Fix $u\in {\cal V}$.  We have, by symmetry, 
$\E N_0=\sum_{v\in {\cal V}}
\PP\{{\cal I}_{v}\}=n\PP\{{\cal I}_{u}\}$.
To evaluate the probability $\PP\{{\cal I}_{u}\}$ we observe that event ${\cal I}_{u}$ occurs when for each layer $G_{n,k}$ we have that either
$u$ is an isolated vertex of $G_{n,k}$ or $u$ is not included in the vertex set of $G_{n,k}$.  For each $k$ this happens with probability
\begin{displaymath}
p_k=\PP\{{\tilde X}_k\le 1\}
+
\PP\{{\bar {\cal A}}_{u|k}\cap \{{\tilde X}_k\ge 2\}\}
+
\PP\{{\cal I}_{u|k}\cap \{{\tilde X}_k\ge 2\}\}.
\end{displaymath}
Furthermore, by the independence and the  equality of distributions of $G_{n,1},\dots, G_{n,m}$,  we have
\begin{align}
\label{2023-04-28+2}
\PP\{{\cal I}_u\}
=
\prod_{k\in [m]}
p_k
=p_1^m.
\end{align}
Now we evaluate $p_k$.
A simple calculation shows that
\begin{align*}
\PP\{
{\bar {\cal A}}_{u|k}
\cap
\{{\tilde X}_k\ge 2\}\}
&
=
\E
\Bigl(
\PP\{{\bar {\cal A}}_{u|k}|X_k\}
{\mathbb I}_{\{{\tilde X}_k\ge 2\}}
\Bigr)
\\
&
=
\E\left((1-n^{-1}{\tilde X}_k){\mathbb I}_{\{{\tilde X}_k\ge 2\}}\right)
=
\PP\{{\tilde X}_k\ge 2\}-n^{-1}\E({\tilde X}_k{\mathbb I}_{\{{\tilde X}_k\ge 2\}}),
\\
\PP\{{\cal I}_{u|k}\cap \{{\tilde X}_k\ge 2\}\}
&=
\E
\Bigl(
\PP\{{\cal I}_{u|k}|X_k\}{\mathbb I}_{\{{\tilde X}_k\ge 2\}}
\Bigr)
=
\E\Bigl(n^{-1}{\tilde X}_k(1-Q_k)^{{\tilde X}_k-1}
{\mathbb I}_{\{{\tilde X}_k\ge 2\}}
\Bigr).
\end{align*}
Invoking these identities in the expression for $p_k$ above we obtain
\begin{equation}
\label{2023-05-13+20}
p_k=1-\E\Bigl(n^{-1}{\tilde X}_kh({\tilde X}_k,Q_k)
{\mathbb I}_{\{{\tilde X}_k\ge 2\}}
\Bigr)
=
1-n^{-1}{\tilde \kappa}.
\end{equation}
Finally, we write (\ref{2023-04-28+2}) in the form 
\begin{equation}
\label{2023-04-29+10}
\PP\{{\cal I}_u\}=e^{m\ln(1-n^{-1}{\tilde \kappa})}
=
e^{-\frac{m}{n}{\tilde \kappa}+O(n^{-2}m{\tilde \kappa}^2)}
\end{equation}
and  arrive to (\ref{2023-04-28}). In the last step we used 
${\color{red}\ln(1-t)= -t+O(t^2)}$ as $t\to 0$.

Proof of (\ref{2023-04-28+1}).
Fix $u,v\in {\cal V}$. We have, by symmetry,
\begin{align*}
\Var N_0
&
=\E N_0^2-(\E N_0)^2
=n \Var {\mathbb I}_{{\cal I}_u}
+n(n-1)
\Cov ({\mathbb I}_{{\cal I}_u},{\mathbb I}_{{\cal I}_v}).
\end{align*}
Note that
\begin{displaymath}
n\Var {\mathbb I}_{{\cal I}_u}
\le 
n
\PP\{{\cal I}_u\}
=
\E N_0
=
o\bigl((\E N_0)^2\bigr),
\end{displaymath}
since $\E N_0\to+\infty$.
It remains to show that
\begin{equation}
\label{2023-04-29+11}
n(n-1)
\Cov ({\mathbb I}_{{\cal I}_u},{\mathbb I}_{{\cal I}_v})
=
o\bigl((\E N_0)^2\bigr).
\end{equation}
Here $
\Cov ({\mathbb I}_{{\cal I}_u},{\mathbb I}_{{\cal I}_v})
=\PP\{{\cal I}_{u,v}\}-\PP\{{\cal I}_{u}\}\PP\{{\cal I}_{v}\}$. Let us  evaluate the  probability $\PP\{{\cal I}_{u,v}\}$.
By the independence and the equality of distributions of $G_{n,1},\dots, G_{n,m}$, we have that
\begin{equation}
\label{2023-04-28+5}
\PP\{{\cal I}_{u,v}\}
=
\prod_{k=1}^mq_k=q_1^m,
\end{equation}
where $q_k$ is the probability that either ${\tilde X}_k\le 1$
or ${\tilde X}_k\ge 2$ and one of the following alternatives hold:

${\cal B}_1$:  $\{u,v\}\cap {\cal V}_{n,k}=\emptyset$;

${\cal B}_2$: $\{u,v\}\cap {\cal V}_{n,k}=\{u\}$ and $u$ is 
isolated in $G_{n,k}$;

${\cal B}_3$: $\{u,v\}\cap {\cal V}_{n,k}=\{v\}$ and $v$ is 
isolated in $G_{n,k}$;

${\cal B}_4$: $\{u,v\}\subset {\cal V}_{n,k}$ and both $u$ and $v$ are 
isolated in $G_{n,k}$.

Hence, 
$q_k
=
\PP\{{\tilde X}_k\le 1\}
+
\sum_i\PP\{{\cal B}_i\cap \{{\tilde X}_k\ge 2\}\}$. 

Let us evaluate  probabilities 
$\eta_{k,i}=\PP\{{\cal B}_i\cap \{{\tilde X}_k\ge 2\}\}$.
We have
\begin{align*}
\eta_{k,1}
&=
\E\left(\PP\{{\cal B}_1|X_k){\mathbb I}_{\{{\tilde X}_k\ge 2\}}\right)
=
\E\left(\frac{(n-{\tilde X}_k)_2}{(n)_2}
{\mathbb I}_{\{{\tilde X}_k\ge 2\}}\right),
\\
\eta_{k,3}
&
=
\eta_{k,2}
=
\E\left(\PP\{{\cal B}_2|X_k,Q_k){\mathbb I}_{\{{\tilde X}_k\ge 2\}}\right)
=
\E\left(
\frac{{\tilde X}_k (n-{\tilde X}_k)}{(n)_2}
(1-Q_k)^{{\tilde X}_k-1}
{\mathbb I}_{\{{\tilde X}_k\ge 2\}}
\right),
\\
\eta_{k,4}
&=
\E\left(
\PP\{{\cal B}_4|X_k){\mathbb I}_{\{{\tilde X}_k\ge 2\}}
\right)
=
\E\left(
\frac{({\tilde X}_k)_2}{(n)_2}
(1-Q_k)^{2{\tilde X}_k-3}
{\mathbb I}_{\{{\tilde X}_k\ge 2\}}\right).
\end{align*}
We invoke these expressions in the identity 
$q_k=
\PP\{{\tilde X}_k\le 1\}
+ \eta_{k,1}+\cdots+\eta_{k,4}$. After a simple calculation
we obtain that
\begin{equation}
\nonumber
q_k
=
1
-
\frac{2}{n}{\tilde \kappa}
+
\Delta,
\qquad
\Delta
:=
\E
\left(
\frac{({\tilde X}_k)_2}{(n)_2}
\Bigl(1-2(1-Q_k)^{({\tilde X}_k-1)_+}
+
(1-Q_k)^{(2{\tilde X}_k-3)_+}
\Bigr)
\right).
\end{equation}
Furthermore, the inequality 
$(1-Q_k)^{({\tilde X}_k-1)_+}\ge (1-Q_k)^{(2{\tilde X}_k-3)_+}$ 
implies $\Delta\le \Delta'$, where 
\begin{displaymath}
\Delta'
=
\E
\left(
\frac{({\tilde X}_k)_2}{(n)_2}
h({\tilde X}_k,Q_k)
\right).
\end{displaymath}
Hence, we have 
\begin{equation}
\label{2023-05-18}
q_k\le 1-2n^{-1}{\tilde \kappa}+\Delta'.
\end{equation}
 Invoking this inequality in (\ref{2023-04-28+5}) and using $1+t\le e^t$ we obtain
\begin{displaymath}
\PP\{{\cal I}_{u,v}\}
\le 
\left(1-\frac{2}{n}{\tilde \kappa}+\Delta'\right)^m
\le
e^{-2\frac{m}{n}{\tilde \kappa}+m\Delta'}.
\end{displaymath}

Now we are ready to prove (\ref{2023-04-29+11}). Combining 
(\ref{2023-04-29+10}) with the 
 bound for  $\PP\{{\cal I}_{u,v}\}$ above we obtain
\begin{align*}
\Cov ({\mathbb I}_{{\cal I}_u},{\mathbb I}_{{\cal I}_v})
\le
e^{2m\ln\left(1-\frac{{\tilde \kappa}}{n}\right)
}
\left(
e^{-2\frac{m}{n}{\tilde \kappa}+m\Delta'
-
2m\ln\left(1-\frac{{\tilde \kappa}}{n}\right)}-1
\right).
\end{align*}
In view of (\ref{2023-04-28}) it suffices to show that 
$H:=-2\frac{m}{n}{\tilde \kappa}+m\Delta'
-
2m\ln\left(1-\frac{{\tilde \kappa}}{n}\right)$
($=$ the argument of the second exponent) is $o(1)$.
Invoking $\ln(1+t)=t+O(t^2)$ we obtain
$H=m\Delta'+O(mn^{-2})$. Recall that  $m=O(n\ln n)$.  Consequently,
the second term $O(mn^{-2})=o(1)$.  It remains to show that $m\Delta'=o(1)$ or, equivalently,  $\Delta'\, n\ln n=o(1)$.
 To this aim we split
\begin{align*}
\Delta'
&
=
\E\left(
 \frac{(X_k)_2}{(n)_2}
h(X_k,Q_k)
 {\mathbb I}_{\{X_k\le n\}}
 \right)
+
\E
\bigl(
h(n,Q_k)
{\mathbb I}_{\{X_k\ge n\}}
\bigr)
\\
&
=
\Delta'_1+\Delta'_2
\end{align*}
and consider $\Delta'_1$ and $\Delta'_2$ separately.
Using the inequality $h(n,Q_k)\le h(X_k,Q_k)$ for $X_k\ge n$ we upperbound $\Delta'_2$, 
\begin{displaymath}
\Delta'_2
\le 
\frac{1}{n\ln(1+n)}\E\Bigl(X_k
h(X_k,Q_k)
{\mathbb I}_{\{X_k\ge n\}}
\ln(1+ X_k)
\Bigr)
=
o\left(\frac{1}{n\ln n}\right).
\end{displaymath}
In the last step we use condition (\ref{2023-04-29+12}).
To show the correspoding bound for $\Delta'_1$ we make use of the
properties of the  function $\phi(x)$. Namely, for
$1\le x\le n$ we have 
\begin{displaymath}
x^2=x \phi(x)\ln(1+x) \frac{x}{\phi(x)\ln(1+x)}
\le
x \phi(x)\ln(1+x)\frac{n}{\phi(n)\ln(1+n)}.
\end{displaymath}
 Using this inequality we estimate
\begin{align*}
\Delta'_1
&
\le 
\E
\left(
\frac{X_k\phi(X_k)\ln(1+X_k)}{n\phi(n)\ln (1+n)}
h(X_k,Q_k){\mathbb I}_{\{X_k\le n\}}\right)
=
O
\left(
\frac{1}{n\phi(n)\ln(1+n)}
\right).
\end{align*}
In the last step we used (\ref{2023-06-13}).
Hence $\Delta'_1n\ln n =O(1/\phi(n))=o(1)$. 
The proof of  (\ref{2023-04-28+1}) is complete. We  arrive to (\ref{2023-04-27}).

\medskip

We secondly consider the case where
 ${\tilde\lambda}_{n,m}\to-\infty$ and show that
 \begin{equation}
\label{2023-05-01+3}
1-\PP\{G_{[n,m]}\ {\text{\rm{is connected}}}\}\to 0.
\end{equation} 
In the proof we use the following observation.

\begin{obs}\label{observation}
 Given  two sequences of positive integers
 $m''=m''(n)$ and $m'=m'(n)$  such that $m''>m'$
we can couple $G_{[n,m']}$ and $G_{[n,m'']}$
so that 
$\PP\{G_{[n,m']}\subset G_{[n,m'']}\}=1$. The coupling implies the inequality
$
\PP\{G_{n,m''}  \ {\text{is connected}}\}\ge 
\PP\{G_{n,m'} \ {\text{is connected}}\}$. Now we see that 
if 
(\ref{2023-05-01+3})
holds for 
$m=m'$ then (\ref{2023-05-01+3}) holds for $m=m''$ as well.
Hence to prove (\ref{2023-05-01+3}) it suffices to consider
sequences $m=m(n)$ satisfying ${\tilde\lambda}_{n,m}\to-\infty$
that grow slowly. For example, we may assume that 
\begin{equation}
 \label{2023-05-01+6}
 m\le \frac{2}{{\tilde{\kappa}}_n}n\ln n.
 \end{equation} 
\end{obs}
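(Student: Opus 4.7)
The plan has three small pieces: an explicit coupling, monotonicity of the connectivity property under edge addition, and an arithmetic reduction to the slow-growth regime.

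First, I would build the coupling tautologically. Sample a single sequence $(X_{n,i},Q_{n,i})$, $i\in[m'']$, with the law prescribed in Section 1.1, and conditional on it generate the communities $G_{n,1},\dots,G_{n,m''}$ independently, each according to its construction rule (a uniform vertex set of size ${\tilde X}_{n,i}$ together with independent Bernoulli($Q_{n,i}$) edges). Define $G_{[n,m']}$ to be the union built from the first $m'$ communities and $G_{[n,m'']}$ the union built from all $m''$. By construction the edge set of $G_{[n,m']}$ is a deterministic subset of the edge set of $G_{[n,m'']}$, while both graphs have the intended marginal distributions; this yields $\PP\{G_{[n,m']}\subset G_{[n,m'']}\}=1$.

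Second, connectivity is a monotone increasing graph property: adding edges cannot destroy connectivity. Under the coupling above this gives the event inclusion $\{G_{[n,m']}\text{ is connected}\}\subset\{G_{[n,m'']}\text{ is connected}\}$, and the inequality between probabilities follows. In particular, if (\ref{2023-05-01+3}) holds along a sequence $m=m'(n)$, then it automatically holds along every sequence $m=m''(n)\ge m'(n)$.

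Third, for the reduction to (\ref{2023-05-01+6}), given any sequence $m=m(n)$ with ${\tilde\lambda}_{n,m}\to-\infty$, either $m(n)\le 2n\ln n/{\tilde\kappa}_n$ already holds for all sufficiently large $n$, in which case I set $m':=m$, or else I define $m'(n):=\lfloor 2n\ln n/{\tilde\kappa}_n\rfloor$. In the latter case $m'\le m$, and $\tilde\lambda_{n,m'}=\ln n-(m'/n){\tilde\kappa}_n=-\ln n+O(1)\to-\infty$, so $m'$ still belongs to the regime where (\ref{2023-05-01+3}) is claimed. By the previous paragraph, proving (\ref{2023-05-01+3}) under the extra hypothesis (\ref{2023-05-01+6}) is enough. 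The main obstacle is essentially nonexistent here: the coupling is the obvious one, monotonicity of connectivity is elementary, and the third step is a brief arithmetic check. The observation is recorded only to license the convenient upper bound (\ref{2023-05-01+6}) in the forthcoming second-moment computation.
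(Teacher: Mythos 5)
Your proposal is correct and follows essentially the same route as the paper, which itself only sketches the argument: the monotone coupling via a common realization of the first $m'$ communities, monotonicity of connectivity under edge addition, and the reduction to the slow-growth regime. The only small imprecision is your case split in the third step, since $m(n)$ could oscillate around $2n\ln n/{\tilde\kappa}_n$; setting $m'(n):=\min\bigl\{m(n),\lfloor 2n\ln n/{\tilde\kappa}_n\rfloor\bigr\}$ handles all cases at once and your verification that ${\tilde\lambda}_{n,m'}\to-\infty$ goes through unchanged.
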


Let us prove  (\ref{2023-05-01+3}). $G_{[n,m]}=({\cal V},{\cal E})$ 
is disconnected if and only if there exists a vertex 
set $A\subset {\cal V}$ of size $1 \le |A|\le n/2$ such 
that $G_{[n,m]}$ contains no edges between $A$ and ${\cal V}\setminus A$.
 For every $A$  the probability that $G_{[n,m]}$ contains no
edges between $A$ and ${\cal V} \setminus A$, is  
${\bar q}_{|A|}^m$ (we use the fact that  $G_{n,1},\dots, G_{n,m}$ are independent and identically distributed).
Therefore, 
by the union bound,
\begin{equation}
\label{2023-05-13}
1
-
\PP\{G_{[n,m]}\ {\text{\rm{is connected}}}\}
\le
\sum_{1\le r\le n/2}{\binom{n}{r}}{\bar q}_r^m=:S.
\end{equation}
In what follows we show that $S=o(1)$. 

Denote, for short, 
$\alpha=\E\bigl(Q{\mathbb I}_{\{X\ge 2\}}\bigr)$.
Given $0<\beta<1$ we split  
\begin{displaymath}
S=S_1+S_2,
\qquad
S_1= 
\sum_{1\le r\le n^{\beta}}{\binom{n}{r}}{\bar q}_r^m,
\qquad
S_2=
\sum_{n^{\beta}< r\le n/2}{\binom{n}{r}}{\bar q}_r^m.
\end{displaymath}
For any $0<\beta<1$ we  show that $S_1=o(1)$. We also show that   for $\beta=\beta_0=\max\{1-\frac{\alpha}{{\color{red}2}\kappa}, \frac{1}{2}\}$ we have  $S_2=o(1)$. These two bounds imply $S=o(1)$.

\medskip

Proof of $S_2=o(1)$.  
Using (\ref{2023-05-03}) we upperbound  
\begin{equation}
\label{2023-05-12+1}
{\bar q}_{r}
=
\PP\{X\le 1\}
+
\E \bigl(q_r({\tilde X},Q){\mathbb I}_{\{{\tilde X}\ge 2\}}\bigr)
\le 
1-2\alpha \frac{r(n-r)}{n(n-1)}
\le 
1-{\color{red}\alpha}\frac{r}{n}.
\end{equation}
We will assume that $n$ is large enough so that  
${\tilde\lambda}_{n,m}<0$. Now the inequality 
${\tilde \kappa}\le \kappa$
implies 
$m> \frac{n}{{\tilde \kappa}}\ln n\ge \frac{n}{\kappa}\ln n$.
Hence
\begin{equation}
\label{2023-05-05}
{\bar q}_r^m
\le  
\left(
1
-
{\color{red}\alpha}\frac{r}{n}
\right)^m
\le 
e^{-{\color{red}\alpha} r\frac{m}{n}}
\le 
e^{-\frac{{\color{red}\alpha}}{\kappa} r\ln n}.
\end{equation}
Next, using Stirling's approximation for $n!$, $(n-r)!$ and $r!$ we upperbound the binomial coefficient 
\begin{displaymath}
\binom{n}{r}
\le 
\frac{n^n}{r^r(n-r)^{n-r}}=e^{\xi_1+\xi_2},
\qquad
\xi_1
:=
n\ln\left(\frac{n}{n-r}\right),
\qquad
\xi_2
:=
r\ln\left(\frac{n-r}{r}\right).
\end{displaymath}
Now we consider $\xi_1$ and $\xi_2$. For $r\ge n^{\beta}$ we have
$\xi_2\le (1-\beta)r\ln n$. Furthermore, the inequalities $\frac{1}{1-t}\le 1+2t$ and $\ln(1+2t)\le 2t$, for $0<t\le 0.5$ imply
\begin{displaymath}
\xi_1
=
n\ln\left(\frac{1}{1-\frac{r}{n}}\right)
\le 2r.
\end{displaymath}
We conclude that 
$\binom{n}{r}\le e^{2r+(1-\beta)r\ln n}$. Combining this bound with 
(\ref{2023-05-05}) we obtain that
\begin{displaymath}
\binom{n}{r}{\bar q}_r^m
\le e^{2r+(1-\beta)r\ln n-\frac{{\color{red}\alpha}}{\kappa}r\ln n}.
\end{displaymath}
Choosing $\beta=\beta_0$ we obtain $\binom{n}{r}{\bar q}_r^m
\le e^{2r-\frac{\alpha}{{\color{red}2}\kappa}r\ln n}$.
This inequality implies $S_2=o(1)$.

\medskip

Proof of $S_1=o(1)$.  Let $0<\beta<1$. 
We show below that for any $0<\tau<1$
and  $1\le r\le n^{\beta}$
\begin{equation}
\label{2023-05-01+8}
\E \bigl(
R_2(r,{\tilde X})h({\tilde X},Q)
\bigr)
\le 
\frac{r}{n}\frac{1}{\ln(1+\frac{\tau n}{r})}
\varphi\left(\frac{\tau n}{r}\right)
+
\frac{r}{n}
\frac{\tau}{2}
\E \left({\tilde X}
h({\tilde X},Q)
\right).
\end{equation}
Now assume that (\ref{2023-05-01+8}) holds.
Invoking $\tau=\frac{1}{\kappa\ln n}$
we obtain
 for large $n$  
(say, $n>n_{\beta,\kappa}$)
that
\begin{equation}
\label{2023-05-01+9}
\E  \bigl(
R_2(r,{\tilde X})h({\tilde X},Q)
\bigr)
\le
\frac{r}{n}\frac{1}{\ln n}
\end{equation}
uniformly in $1\le r\le n^{\beta}$.
Moreover, for large $n$ 
(say, $n>n_{\beta}$) we have 
$R_1\le \frac{r}{n}\frac{1}{\ln n}$.
Combining these inequalities with  (\ref{2023-05-01+1})
we obtain 
\begin{equation}
{\bar q}_r
\le 1-\frac{r}{n}{\tilde \kappa}+R,
\qquad
{\text{where}}
\quad R\le \frac{r}{n}\frac{2}{\ln n}.
\end{equation}
Finally, using $\binom{n}{r}
\le
\frac{n^r}{r!}$ we estimate
\begin{align*}
\binom{n}{r}
{\bar q}_r^m
\le
\frac{n^r}{r!}
e^{
-r
\frac{m}{n}
\left(
{\tilde \kappa}
-
\frac{2}{\ln n}
\right)
}
=
\frac{1}{r!}
e^{
r
\left(
{\tilde\lambda}_{n,m}
+
2
\frac{m}{n\ln n}
\right)
}.
\end{align*} 
Since, by (\ref{2023-05-01+6}), the fraction 
$\frac{m}{n\ln n}$ is bounded from above by a constant, our assumption   
${\tilde\lambda}_{n,m}\to-\infty$ implies $S_1=o(1)$.

It remains to prove (\ref{2023-05-01+8}). 
We split
\begin{align*}
\E \bigl(
R_2(r,{\tilde X})h({\tilde X},Q)
\bigr)
&
=
\E 
\Bigl(
R_2(r,{\tilde X})h({\tilde X},Q)
{\mathbb I}_{\{\frac{r}{n}{\tilde X}\le \tau\}}
\Bigr)
+
\E 
\Bigl(
R_2(r,{\tilde X})h({\tilde X},Q)
{\mathbb I}_{\{\frac{r}{n}{\tilde X}> \tau\}}
\Bigr)
\\
&
=I_1+I_2
\end{align*}
and estimate $I_1$ and $I_2$ separately.
Inequality $e^{-t}\le 1$,  for $t\ge 0$,
 implies
$R_2(r,x)\le \frac{r}{n}x$. Hence
\begin{align*}
I_2
&
\le 
\frac{r}{n}
\E 
\bigl({\tilde X}
h({\tilde X},Q)
{\mathbb I}_{\{\frac{r}{n}{\tilde X}> \tau\}}
\bigr)
\le 
\frac{r}{n}
\E\left(
{\tilde X}
\frac{\ln(1+{\tilde X})}{\ln(1+\frac{\tau n}{r})}
h({\tilde X},Q){\mathbb I}_{\{\frac{r}{n}{\tilde X}> \tau\}}
\right)
\\
&
\le 
\frac{r}{n}\frac{1}{\ln (1+\frac{\tau n}{r})}
\varphi\left(\frac{\tau n}{r}\right).
\end{align*}
To upper bound $I_1$ we estimate $R_2\le r^2x^2/(2n^2)$
using  $e^{-t}\le 1-t+t^2/2$, for $t>0$. 
We have
\begin{align*}
I_1
\le 
\frac{r^2}{2n^2}
\E\left(
{\tilde X}^2
h({\tilde X},Q)
{\mathbb I}_{\{\frac{r}{n}{\tilde X}\le \tau\}}
\right)
\le 
\frac{r}{n}
\frac{\tau}{2}
\E \left({\tilde X}
h({\tilde X},Q)
\right).
\end{align*}
The proof of (\ref{2023-05-08+4}) is complete. 

{\it Proof of (\ref{A}).}
Note that 
${\tilde \kappa}\le \kappa$ implies   ${\tilde \lambda}_{n,m}\ge \lambda'_{n,m}$. Consequently, 
  $\lambda'_{n,m}\to+\infty$ 
implies ${\tilde \lambda}_{n,m}\to+\infty$ and
$\PP\{G_{[n,m]}\ {\text{\rm{is connected}}}\}=o(1)$
follows from (\ref{2023-05-08+4}). 

Now we consider the case where  $\lambda'_{n,m}\to-\infty$. Assume for a moment that
 (\ref{2023-05-01+6}) holds.
The integrability condition (\ref{2023-04-29+12}) implies
that
\begin{displaymath}
0
\le 
\kappa
-
{\tilde \kappa}
\le
\E
\left(
Xh(X,Q){\mathbb I}_{\{X>n\}}
\right)
\le
\frac{1}{\ln(1+n)}
\E \left(
Xh(X,Q){\mathbb I}_{\{X>n\}}\ln(1+X)\right)
=o\left(\frac{1}{\ln n}
\right).
\end{displaymath}
This bound combined with (\ref{2023-05-01+6})  yields
$
{\tilde \lambda}_{n,m}
-
\lambda'_{n,m}
=o(1)$.  Hence ${\tilde \lambda}_{n,m}\to-\infty$
and  (\ref{2023-05-08+4}) implies
$\PP\{G_{[n,m]}\ {\text{\rm{is connected}}}\}= 1-o(1)$.
In the final step of the proof we revoke condition (\ref{2023-05-01+6}) using the coupling argument described in  Observation \ref{observation} above.

\end{proof}

\medskip

In the remaining part of the section we prove Theorem \ref{theorem}. Before the proof we introduce some notation and collect auxiliary results.
We denote
\begin{align*}
&
\kappa_{n,k}=\E\bigl(X_{n,k}h(X_{n,k},Q_{n,k})\bigr),
\qquad
\alpha_{n,k}
=
\E\bigl(Q_{n,k}{\mathbb I}_{\{X_{n,k}\ge 2\}}\bigr),
\\
&
\alpha_{n}=\E\bigl(Q_{n,i_*}{\mathbb I}_{\{X_{n,i_*}\ge 2\}}\bigr),
\qquad
\qquad
\
\,
\alpha
=
\E\bigl(Q{\mathbb I}_{\{X\ge 2\}}\bigr)
\end{align*}
and  observe that
\begin{displaymath}
\sum_{k=1}^m\alpha_{n,k}=m\alpha_n,
\qquad
\sum_{k=1}^m\kappa_{n,k}=m\kappa_n.
\end{displaymath}
Furthermore, we observe that the weak convergence  
$P_{n,m}\to P$ together with the convergence of moments (\ref{2023-06-03}) implies that $\alpha_n\to \alpha$ 
and $\kappa_n\to\kappa$ as $n\to+\infty$. Moreover,
the sequence 
of random variables 
$\{X_{n,i_*}h(X_{n,i_*},Q_{n,i_*})\ln(1+X_{n,i_*}), \, n\ge 1\}$ 
is uniformly integrable. That is, we have
\begin{displaymath}
\lim_{t\to+\infty}
\sup_{n}
\E
\left( 
X_{n,i_*}h(X_{n,i_*},Q_{n,i_*})\ln(1+X_{n,i_*})
{\mathbb I}_{\{X_{n,i_*}\ge t\}}
\right)
=
0.
\end{displaymath}
 Using the uniform integrability property above we can construct 
 a positive increasing function $\phi(x)$ such that 
$\lim_{x\to+\infty}\phi(x)=+\infty$ and
the sequence 
of random variables 
\linebreak
$\{X_{n,i_*}h(X_{n,i_*},Q_{n,i_*})\phi(X_{n,i_*})\ln(1+X_{n,i_*}), \, n\ge 1\}$ 
is uniformly integrable. With some ambiguity of notation we denote 
\begin{displaymath}
\varphi(t):=
\sup_{n}
\E
\left( 
X_{n,i_*}h(X_{n,i*},Q_{n,i_*})\phi(X_{n,i_*})\ln(1+X_{n,i_*})
{\mathbb I}_{\{X_{n,i_*}\ge t\}}
\right).
\end{displaymath}
The  uniform integrability implies  $\varphi(t)\to 0$ as $t\to+\infty$.
Moreover,  one can choose $\phi(x)$ such that
 the function 
$x\to x/(\phi(x)\ln (1+x))$ 
was increasing for $x=1,2,\dots$. {\color{red}For Reader's convenience we give a proof of these facts in  Section 3 (Appendix)}.

{\color{red}In the proof we use the fact that  for $m=O(n\ln n)$ we have
$\max_{1\le k\le m} \kappa_{n,k}=o(n)$.
To show this bound we fix $k$ and estimate  $\kappa^2_{n,k}$.
Combining 
Cauchy-Schwarz
and inequality $h(X_{n,k},Q_{n,k})\le~1$ we estimate
\begin{align*}
\kappa_{n,k}^2
\le 
\E \bigl(X_{n,k}h(X_{n,k},Q_{n,k})\bigr)^2
\le 
\E \bigl(X_{n,k}^2h(X_{n,k},Q_{n,k})\bigr).
\end{align*}
Summing up  we obtain $\sum_{k\in [m]}\kappa_{n,k}^2
\le m\E (X^2_{n,i_*}h(X_{n,i_*},Q_{n,i_*}))$. 
Furthermore, we estimate
\begin{align}
\nonumber
&
\E\bigl(X_{n,i_*}^2h(X_{n,i_*},Q_{n,i_*})\bigr)
=
\E \left(X_{n,i_*}^2h(X_{n,i_*},Q_{n,i_*})
\bigl(
{\mathbb I}_{\{X_{n,i_*}\le \sqrt n\}}
+
{\mathbb I}_{\{X_{n,i_*}> \sqrt n\}}\bigr)
\right)
\\
\label{2023-06-13+1}
&
\qquad
\le
\sqrt{n}\kappa_n
+
\frac{n}{\ln (1+n)}
\E \left(X_{n,i_*}h(X_{n,i_*},Q_{n,i_*}){\mathbb I}_{\{X_{n,i_*}> \sqrt{n}\}}\ln(1+X_{n,i_*})\right)
\\
\nonumber
&
\qquad
=
o\left(\frac{n}{\ln n}\right).
\end{align}
To get the  second term  in (\ref{2023-06-13+1})
we used $x=\ln(1+x)\frac{x}{\ln(1+x)}\le \ln(1+x)\frac{n}{\ln(1+n)}$ for $x:=X_{n,i_*}\le n$. We conclude that 
\begin{equation}
\label{2023-10-25}
\max_{1\le k\le n}\kappa^2_{n,k}
\le
\sum_{k\in[m]}\kappa^2_{n,k}
\le 
m \E (X^2_{n,i_*}h(X_{n,i_*},Q_{n,i_*}))
=o\left(\frac{mn}{\ln n}\right)=o(n^2).
\end{equation}
}

\begin{proof}[Proof of Theorem \ref{theorem}]
 It follows from the convergence
 $\alpha_n\to \alpha$ and $\kappa_n\to \kappa$ as $n\to\infty$ that for sufficiently large $n$ we have
\begin{displaymath}
\alpha_n\ge \alpha/2,
\qquad
\kappa/2\le \kappa_n\le 2\kappa.
\end{displaymath}

We firstly consider the case where
 $\lambda_{n,m}\to+\infty$ and prove (\ref{2023-04-27}).                                     
 The proof  is similar to that of the 
 corresponding  statement in  (\ref{2023-05-08+4}), see  proof of Proposition \ref{proposition}.
The only difference being  that 
the factors $p_1,\dots, p_m$ of the product
in (\ref{2023-04-28+2}) are now all distinct likewise  the  factors 
$q_1,\dots, q_m$ of the product  in (\ref{2023-04-28+5}).
Although the proof remains much the same, 
some adjustments  need to be made.
In what follows we  merely comment on the changes in the proof. 

Let us evaluate the probability  
$\PP\{{\cal I}_u\}=\prod_{k\in[m]}p_k$ of (\ref{2023-04-28+2}),   
where $p_k$ are defined similarly as 
in the proof of Proposition \ref{proposition} above,  but with 
${\tilde X}_k$ replaced by 
$X_{n,k}$. In particular, {\color{red}identity} (\ref{2023-05-13+20}) now reads as follows
$
p_k
=
1-n^{-1}\kappa_{n,k}$.
We have 
\begin{align}
\label{2023-05-17+20}
\PP\{{\cal I}_u\}
=
\prod_{k\in [m]}
p_k
= 
e^{\sum_{k\in[m]}\ln\left (1-\frac{\kappa_{n,k}}{n}\right)}
=
e^{-\frac{m}{n}\kappa_n-R},
\end{align}
where $0\le R\le \frac{1}{n^2}\sum_{k\in[m]}\kappa_{n,k}^2$.
In the last step we used inequalities {\color{red}$-t-t^2\le \ln(1-t)\le -t$ valid for $0\le t\le 0.5$ and the fact that 
$\max_{1\le k\le n}\kappa_{n,k}=o(n)$, see (\ref{2023-10-25}). 
In addition, (\ref{2023-10-25}) implies                 
 $R=o(1)$}. Hence
\begin{equation}
\label{2023-05-18+1}
\PP\{{\cal I}_u\}
=
e^{-\frac{m}{n}\kappa_n+o(1)}.
\end{equation}
The latter bound combined with 
(\ref{2023-05-17+20}) yields  the analogue of (\ref{2023-04-28})
\begin{equation}
\label{2023-05-18+2}
\E N_0=n\PP\{{\cal I}_u\}=e^{\lambda_{n,m}+o(1)}.
\end{equation}

Next we evaluate the probability
$\PP\{{\cal I}_{u,v}\}=\prod_{k\in[m]}q_k$  of
(\ref{2023-04-28+5}), where $q_k$ are defined similarly as 
in the proof of Proposition \ref{proposition} above,  but with 
${\tilde X}_k$ replaced by 
$X_{n,k}$. In particular, inequality (\ref{2023-05-18}) now reads as follows
\begin{displaymath}
q_k\le 1-\frac{2}{n}\kappa_{n,k}+\Delta'_{n,k},
\qquad
{\text{where}}
\qquad
\Delta'_{n,k}
=
\frac{1}{(n)_2}\E\bigl((X_{n,k})_2h(X_{n,k},Q_{n,k})\bigr).
\end{displaymath}
Using $1+t\le e^t$ and denoting 
$\Delta'_{*}=\frac{1}{(n)_2}\E\bigl((X_{n,i_*})_2h(X_{n,i_*},Q_{n,i_*})\bigr)$ 
we upper bound the probability
\begin{displaymath}
\PP\{{\cal I}_{u,v}\}
=
\prod_{k\in[m]}q_k
\le
\prod_{k\in[m]}\left(1-\frac{2}{n}\kappa_{n,k}+\Delta'_{n,k}\right)
\le
e^{-\frac{2m}{n}\kappa_n+m\Delta'_{*}}.
\end{displaymath}
Furthermore, invoking the bound $\Delta'_{*}=o\left(\frac{1}{n\ln n}\right)$,
(which is shown similarly to the bound
 $\Delta'=o\left(\frac{1}{n\ln n}\right)$ of the proof of Proposition \ref{proposition}) we obtain $m\Delta'_{*}=o(1)$. We have shown the bound
$\PP\{{\cal I}_{u,v}\}\le e^{-\frac{2m}{n}\kappa_n+o(1)}$.
Combining this bound with (\ref{2023-05-18+1}) we obtain
\begin{displaymath}
\Cov({\mathbb I}_{{\cal I}_u},{\mathbb I}_{{\cal I}_v})
= 
\PP\{{\cal I}_{u,v}\}- 
\PP\{{\cal I}_u\}
\PP\{{\cal I}_v\}
\le 
e^{-\frac{2m}{n}\kappa_n}o(1).
\end{displaymath}
The latter bound combined with  (\ref{2023-05-18+2}) yields
{\color{red}(\ref{2023-04-29+11})}. 
The remaining steps of the proof of 
(\ref{2023-04-27})  are the same as those of the proof of  
Proposition \ref{proposition}. We omit them.

\medskip

We secondly consider the case where
 $\lambda_{n,m}\to-\infty$ and show (\ref{2023-05-01+3}).                                     
By the coupling argument of  Observation \ref{observation} 
(see proof of Proposition \ref{proposition}) we can assume that
\begin{equation}
 \label{2023-05-13+1}
 m\le \frac{2}{{\kappa}_n}n\ln n.
 \end{equation}
 The proof of (\ref{2023-05-01+3}) is similar to that of the 
 corresponding result of Proposition \ref{proposition}.
The only difference being  that 
in (\ref{2023-05-13}) we replace ${\bar q}_r^m$ by the product
$\prod_{1\le k\le m}{\bar q}_r^{[n,k]}$,
where ${\bar q}_r^{[n,k]}=\E q_r(X_{n,k},Q_{n,k}\bigr)$.
We have
\begin{displaymath}
1
-
\PP\{G_{[n,m]}\ {\text{\rm{is connected}}}\}
\le
\sum_{1\le r\le n/2}{\binom{n}{r}}\prod_{1\le k\le m}{\bar q}_r^{[n,k]}=:S.
\end{displaymath}
To show $S=o(1)$ we split $S=S_1+S_2$ as in the proof of
Proposition \ref{proposition} and we set the value of the parameter 
$\beta$ to $\beta_0=\min\{1-\frac{\alpha}{{\color{red}8}\kappa},\frac{1}{2}\}$.

Proof of  $S_2=o(1)$.
Combining (\ref{2023-05-12+1})  and inequality $1+t\le e^t$ 
we estimate for each $k$
\begin{equation}
\label{2023-05-12+2}
{\bar q}_{r}^{[n,k]}
\le 
1-{\color{red}\alpha_{n,k}}\frac{r}{n}
\le 
e^{-{\color{red}\alpha_{n,k}}\frac{r}{n}}
\end{equation}
and upper bound the product
\begin{align*}
\prod_{1\le k\le m}{\bar q}_r^{[n,k]}
\le
e^{-\frac{r}{n}\sum_{1\le k\le m}{\color{red}\alpha_{n,k}}}
=
e^{-r\frac{m}{n}{\color{red}\alpha_n}}
\le 
e^{-r\frac{m}{n}{\color{red}\frac{\alpha}{2}}}
\le
e^{-\frac{r}{{\color{red}4}}\frac{\alpha}{\kappa}\ln n}.
\end{align*} 
In the last step we used inequalities
$m> \frac{n}{{\kappa}_{n}}\ln n\ge \frac{n}{2\kappa}\ln n$
(the first  one  follows from the inequality
$\lambda_{n,m}<0$, the second one follows from
$\kappa_n\le 2\kappa$). 
The remaining steps of the proof of 
$S_2=o(1)$ are the same as those of the proof of  Proposition \ref{proposition}. We omit them.

Proof of $S_1=o(1)$. In the proof we use the inequality 
\begin{equation}
\label{2023-05-12+4}
\E \bigl(
R_2(r,X_{n,i_*})h(X_{n,i_*},Q_{n,i_*})
\bigr)
\le 
\frac{r}{n}\frac{1}{\ln(1+\frac{\tau n}{r})}
\varphi\left(\frac{\tau n}{r}\right)
+
\frac{r}{n}
\frac{\tau}{2}
\kappa_n,
\end{equation}
which holds  for any $0<\tau<1$ and $1\le r\le n$. 
The proof of (\ref{2023-05-12+4}) is the same as that of (\ref{2023-05-01+8}). {\color{red}For Reader's convenience we
prove  (\ref{2023-05-12+4}) in Section 3 (Appendix).} 

Let $0<\beta<1$.
Choosing
$\tau=\frac{1}{\kappa\ln n}$ we obtain for large $n$ that
uniformly in $1\le r\le n^{\beta}$
\begin{equation}
\label{2023-05-12+9}
\E  \bigl(
R_2(r,X_{n,i_*})h(X_{n,i_*},Q_{n,i_*})
\bigr)
\le
\frac{r}{n}\frac{2}{\ln n}.
\end{equation}
Now we are ready to bound the product 
$\prod_{1\le k\le m}{\bar q}_{r}^{[n,k]}$. 
For each $k$ we estimate, see
(\ref{2023-05-01+1}),
\begin{equation}
{\bar q}_{r}^{[n,k]}
\le 
1-\frac{r}{n}\kappa_{n,k}+R_1
+
\E\bigl( R_2(r,X_{n,k})h(X_{n,k},Q_{n,k})\bigr)
\end{equation}
and using $1+t\le e^t$ upper bound the product
\begin{align*}
\prod_{1\le k\le m}{\bar q}_r^{[n,k]}
&
\le
e^{-\frac{r}{n}\sum_{k=1}^m\kappa_{n,k}
+
mR_1
+
\sum_{k=1}^m\E\bigl( R_2(r,X_{n,k})h(X_{n,k},Q_{n,k})\bigr)}
\\
&
=
e^{-\frac{r}{n}m\kappa_n
+
mR_1
+
m\E\bigl( R_2(r,X_{n,i_*})h(X_{n,i_*},Q_{n,i_*})\bigr)}
\\
&
\le e^{-\frac{rm}{n}\kappa_n+3\frac{rm}{n\ln n}}.
\end{align*}
In the last step we invoked (\ref{2023-05-12+9}) and inequality $R_1\le \frac{r}{n\ln n}$, which holds for large $n$. 

Next, using 
$\binom{n}{r}\le \frac{n^r}{r!}$ we estimate
\begin{align*}
\binom{n}{r}\prod_{1\le k\le m}{\bar q}_r^{[n,k]}
\le 
\frac{n^r}{r!}e^{-\frac{rm}{n}\kappa_{n}+3\frac{rm}{n\ln n}}
=
\frac{1}{r!}e^{r\left(\lambda_{n,m}-\frac{3m}{n\ln n}\right)}.
\end{align*}
Finally, since our assumption (\ref{2023-05-13+1}) and inequality 
$\kappa_n\ge \kappa/2$ imply that  $\frac{m}{n\ln n}$ is bounded by a constant, the bound  $S_1=o(1)$ follows because of 
$\lambda_{n,m}\to-\infty$.

\end{proof}

\section{Appendix}
\begin{lem}
\label{lemma_appendix}
Let $Y,Y_1,Y_2,\dots$ be a sequence of random variables.
Assume that $\E |Y|<\infty$ and
$\E |Y_n|<\infty$ for $n=1,2,\dots$.
Let $n\to+\infty$.
Assume that $Y_n\to Y$ in distribution and $\E |Y_n|\to \E |Y|$. 
Then there exists a non-decreasing function $g:{\mathbb R}\to[1,+\infty)$ (dependening on the distributions of $Y$ and $Y_n$, $n=1,2,\dots$) such that $g(t)\to+\infty$ as $t\to+\infty$ and
\begin{equation}
\label{2023-10-23}
\lim_{t\to+\infty}\sup_n\E\left(|Y_n|g(Y_n){\mathbb I}_{\{|Y_n|>t\}}\right)=0.
\end{equation}
\end{lem}
We remark that one can replace $g$ in (\ref{2023-10-23}) by another   non decreasing function $\phi$  growing to infinity (perhaps slower than $g$)  such that
 the function $t\to t/(\phi(t)\ln (1+t))$ was increasing.
 We denote 
 \begin{displaymath}
\varphi(t):= \sup_n\E\left(|Y_n|\phi(Y_n){\mathbb I}_{\{|Y_n|>t\}}\right)
\end{displaymath}
 and observe that $\varphi(t)\to 0$ as $t\to+\infty$.

\begin{proof}[Proof of Lemma \ref{lemma_appendix}]
We first show that 
\begin{equation}
\label{2023-10-23+1}
\forall\varepsilon>0 
\
\exists
\,
T_{\varepsilon}>0:
\
t\ge T_{\varepsilon}
\
\Rightarrow
\
\sup_{n}\E\left(|Y_n|{\mathbb I}_{\{|Y_n|>t\}}\right)<\varepsilon.
\end{equation}
By the convergence in distribution $Y_n\to Y$ we have for any 
 continuity point of the distribution function of $Y$ (c.p. for short) $s\in {\mathbb R}$ 
 that
 \begin{equation}
 \label{A++}
 \lim_n \E\left(|Y_n|{\mathbb I}_{\{|Y_n|\le s\}}\right)
 =
 \E \left(|Y|{\mathbb I}_{\{|Y|\le s\}}\right).
 \end{equation}
 We fix $\varepsilon>0$ 
  in (\ref{2023-10-23+1}). In view of $\E |Y|<\infty$ we find a c.p. 
  $s_{\varepsilon}>0$ such that
   
\begin{displaymath}
\E\left(|Y|{\mathbb I}_{\{|Y|> s_{\varepsilon}\}}\right)
< 
\frac{\varepsilon}{3}.
\end{displaymath}
By (\ref{A++}) there exists $n_0=n_0(\varepsilon)$ such that
 \begin{displaymath}
n\ge n_0
\
\Rightarrow
\
\left| \E\left(|Y_n|{\mathbb I}_{\{|Y_n|\le s_{\varepsilon}\}}\right)
 -
 \E\left(|Y|{\mathbb I}_{\{|Y|\le s_{\varepsilon}\}}\right) 
 \right|
 <\frac{\varepsilon}{3}
 \end{displaymath}
 Furthermore, the convergence $\E |Y_n|\to \E |Y|$ implies that
 there exists $n_1=n_1(\varepsilon)$ such that
 \begin{displaymath}
 n\ge n_1
 \
 \Rightarrow
 \
\left|\E|Y_n|-\E |Y|\right|<\frac{\varepsilon}{3}.
\end{displaymath} 
  Consequently, for $n>n_2:=\max\{n_0,n_1\}$ we have
 \begin{align}
 \label{2023-10-23+3}
  \E\left(|Y_n|{\mathbb I}_{\{|Y_n|>s_{\varepsilon}\}}\right)
  &
  =
\left(  \E |Y_n|-\E |Y|
\right)
  +
 \left( \E\left(|Y|{\mathbb I}_{\{|Y|\le s_{\varepsilon}\}}\right)
-
\E\left(|Y_n|{\mathbb I}_{\{|Y_n|\le s_{\varepsilon}\}}\right)
\right)
\\
\nonumber
&
+  
\E\left(|Y|{\mathbb I}_{\{|Y|> s_{\varepsilon}\}}\right)
  \\
  \nonumber
  &
  <
  \frac{\varepsilon}{3}+\frac{\varepsilon}{3} +\frac{\varepsilon}{3}
  = \varepsilon.
  \end{align}
Finally, the integrability of $Y_1,\dots, Y_{n_2}$ implies that
there exists $t_{\varepsilon}>0$ such that
\begin{equation}
\label{2023-10-23+4}
\max_{1\le n\le n_2}
\E\left(
|Y_n|{\mathbb I}_{\{|Y_n|\ge t_{\varepsilon}\}}
\right)
<\varepsilon.
\end{equation}
Combining (\ref{2023-10-23+3}) and  (\ref{2023-10-23+4})  
we obtain 
(\ref{2023-10-23+1}) 
with $T_{\varepsilon}=\max\{s_{\varepsilon}, t_{\varepsilon}\}$.
 
 \bigskip
 
 Now we derive (\ref{2023-10-23}) from (\ref{2023-10-23+1}).
We have that  the function
 \begin{displaymath}
 t\to h(t):=
 \sup_{n}\E\left(|Y_n|{\mathbb I}_{\{|Y_n|>t\}}\right)
 \end{displaymath}
 is non-increasing and, by (\ref{2023-10-23+1}), $h(t)\to 0$
as $t\to+\infty$.
We can choose a sequence $\{a_i,\, i\ge 0\}$ with $a_i\in \{0,1\}$ having infinitely many $1$'s such that
\begin{equation}
\label{2023-10-23+6}
\sum _{i\ge 0}a_ih(i)<\infty,
\qquad
\lim_{k\to+\infty}h(k)\sum_{i=0}^{k}a_i=0.
\end{equation}
For convenience we assume that $a_0=1$.  Denote $g(j)=\sum_{i=0}^ja_i$ and define the function
\begin{displaymath}
t\to g(t)=\sum_{i\ge 0}g_i{\mathbb I}_{\{i\le t<i+1\}}
\end{displaymath}
 (i.e. for each $i\ge 0$ and $t\in[i,i+1)$ we have 
 $g(t)=g(i)$).

Given $r$  introduce the function 
$h_r(t)=\E\left (|Y_r|{\mathbb I}_{\{|Y_r|>t\}}\right)$.
For any integer $k>0$  we have
\begin{align*}
\E\left (|Y_r|g(Y_r){\mathbb I}_{\{|Y_r|>k\}}\right) 
&
=\sum_{i\ge k}g(i)(h_r(i)-h_r(i+1))
\\
&
=
\lim_{n\to\infty}
\sum_{i=k}^ng(i)(h_r(i)-h_r(i+1))
\\
&
=
g(k)h_r(k)
-
\lim_{n\to\infty}\left(g(n)h_r(n+1)
-\sum_{i=k}^{n-1}h_r(i+1)(g(i+1)-g(i))
\right).
\end{align*} 
In the last step we applied the 
summation by parts formula.
Note that
\begin{displaymath}
\lim_{n\to\infty}
\sum_{i=k}^{n-1}h_r(i+1)(g(i+1)-g(i))
=\sum_{i\ge k}h_r(i+1)a_{i+1}
\le
\sum_{i\ge k}h(i+1)a_{i+1}
\end{displaymath}
and $g(k)h_r(k)\le g(k)h(k)$. Furthermore,  $g(n)h_r(n+1)\le g(n)h(n)\to 0$ as $n\to+\infty$, by the second relation of (\ref{2023-10-23+6}).
Hence 
\begin{displaymath}
\E\left (|Y_r|g(Y_r){\mathbb I}_{\{|Y_r|>k\}}\right) 
\le 
 g(k)h(k)+\sum_{i\ge k}h(i+1)a_{i+1}.
\end{displaymath}
Since this bound holds uniformly in $r$ we conclude from (\ref{2023-10-23+6}) that  
\begin{displaymath}
\lim_{t\to+\infty}
\sup_r\E\left (|Y_r|g(Y_r){\mathbb I}_{\{|Y_r|>t\}}\right) 
=0.
\end{displaymath}
We note that $g$ is nondecreasing, $g(0)=1$ and $g(t)\to+\infty$ as $t\to+\infty$. 

\end{proof}

{\it Proof of} (\ref{2023-05-12+4}). Recall that 
$R_2(r,x)=e^{-\frac{r}{n}x}-1+\frac{r}{n}x$.
In the proof we use inequalities
\begin{displaymath}
 R_2(r,x)\le \frac{r}{n}x,
 \qquad
 R_2(r,x)
 \le
 \frac{r^2x^2}{2n^2},
 \qquad
 {\text{ for}}
 \qquad
  x\ge 0,
 \end{displaymath}
which follow from respective inequalities 
$e^{-t}\le 1$ and $e^{-t}\le 1-t+t^2/2$ valid for
 $t\ge 0$.
 
 Let us prove (\ref{2023-05-12+4}). We split
 \begin{align*}
 \E\left(R_2(r,X_{n,i_*})h(X_{n,i_*}, Q_{n,i_*})\right)
 &
 =
 \E\left(R_2(r,X_{n,i_*})h(X_{n,i_*}, Q_{n,i_*})
( {\mathbb I}_{\{\frac{r}{n}X_{n,i_*}\le\tau\}}
+
 {\mathbb I}_{\{\frac{r}{n}X_{n,i_*}>\tau\}})\right)
 \\
 &
 =:I_1+I_2.
 \end{align*}
 Using $R_2(r,x)\le \frac{r}{n}x$ we estimate
 \begin{align*}
 I_2
&
 \le 
\frac{r}{n}  
\E\left(X_{n,i_*}h(X_{n,i_*}, Q_{n,i_*})
 {\mathbb I}_{\{\frac{r}{n}X_{n,i_*}>\tau\}}\right)
\\
&
 \le
 \frac{r}{n}  
\E\left(X_{n,i_*}\frac{\ln(1+X_{n,i_*})}{\ln(1+\frac{\tau n}{r})}
h(X_{n,i_*}, Q_{n,i_*})
 {\mathbb I}_{\{\frac{r}{n}X_{n,i_*}>\tau\}}\right)
\\
&
 \le 
 \frac{r}{n}
 \frac{1}{\ln(1+\frac{\tau n}{r})}\varphi\left(\frac{\tau n}{r}\right).
 \end{align*}
 Using $R_2(r,x)\le \frac{r^2x^2}{2n^2}$ we estimate
 \begin{align*}
 I_1
 \le 
 \frac{r^2}{2n^2}
 \E
 \left(X_{n,i_*}^2h(X_{n,i_*},Q_{n,i_*})
 {\mathbb I}_{\{\frac{r}{n}X_{n,i_*}\le\tau\}}
 \right)
 \le
 \frac{r\tau}{2n}
 \E
 \left(X_{n,i_*}h(X_{n,i_*},Q_{n,i_*})
 \right).
 \end{align*}
 Combining these estimates we obtain (\ref{2023-05-12+4}).

\end{document}